\newtheorem*{theoa}{Theorem A}
\newtheorem{conj}{Conjecture}
\newtheorem{theo}{Theorem}[section]
\newtheorem{prop}[theo]{Proposition}
\newtheorem{lem}[theo]{Lemma}
\newtheorem{claim}[theo]{Claim}
\newtheorem{cor}[theo]{Corollary}
\newtheorem{rem}[theo]{Remark}
\theoremstyle{definition}
\newtheorem{defi}[theo]{Definition}
\begin{document}
\def \diff {\operatorname{Diff}}
\def \vep {\operatorname{\varepsilon}}
\selectlanguage{english}

\title{On the dominated splitting of Lyapunov stable aperiodic classes}

\author{
Xiaodong Wang
}
\date{}

\maketitle

\begin{abstract}
Recent works related to Palis conjecture of J. Yang, S. Crovisier, M. Sambarino and D. Yang showed that any aperiodic class of a $C^1$-generic diffeomorphism far away from homoclinic bifurcations (or homoclinic tangencies) is partially hyperbolic, see~\cite{csy,y}. We show in this paper that, generically, a non-trivial dominated splitting implies partial hyperbolicity for an aperiodic class if it is Lyapunov stable. More precisely, for $C^1$-generic diffeomorphisms, if a Lyapunov stable aperiodic class has a non-trivial dominated splitting $E\oplus F$, then one of the two bundles is hyperbolic (either $E$ is contracted or $F$ is expanded).
\end{abstract}

\section{Introduction}

One of the goals of dynamical systems is to describe most of (generic or residual, dense) the systems. Examples of Smale~\cite{smale} and others showed that the stable ones (or hyperbolic ones) are not dense in the space of diffeomorphisms, which was thought to be true in the sixties. Palis~\cite{palis,palis2} conjectured that the presence of a homoclinic bifurcation (homoclinic tangency or heterodimensional cycle) is the essential obstacle of hyperbolicity. Related to this conjecture, there are many works. ~\cite{ps} proved this conjecture for the case of dimension two. ~\cite{bgw,c2} proved a weaker version of this conjecture, which states that the union of Morse-Smale systems and the ones with a transverse homoclinic intersection is a dense set of the space of diffeomorphisms (\cite{bgw} solved for dimension three and~\cite{c2} solved for any dimension). In~\cite{c3,cp,csy}, it is proved that far from homoclinic bifurcations (or just far from homoclinic tangencies), the systems have some weak hyperbolicity (partial hyperbolicity or essential hyperbolicity).

Denote by $\diff^1(M)$ the space of $C^1$ self-diffeomorphisms of a smooth compact Riemannian manifold $M$, and assume $f\in\diff^1(M)$. An invariant compact set $K$ is \emph{hyperbolic}, if the tangent bundle can split into two continuous sub-bundles $E^s\oplus E^u$ such that $E^s$ is \emph{contracted} (there are two numbers $m\in\mathbb{N}$ and $0<\lambda<1$, such that, $\|Df^m|_{E^s(x)}\|<\lambda$ holds for all $x\in K$) and $E^u$ is \emph{expanded} (contracted respect to $f^{-1}$). The set $K$ is said to have a \emph{dominated splitting}, if the tangent bundle has a continuous splitting $T_KM=E\oplus F$ and there are two numbers $m\in\mathbb{N}$ and $0<\lambda<1$, such that, for any point $x\in K$, we have $\|Df^m|_{E(x)}\|\cdot\|Df^{-m}|_{F(f^m(x))}\|<\lambda$. To be precise, we also call such a splitting an \emph{$(m,\lambda)$-dominated splitting}. A \emph{partially hyperbolic splitting} over $K$ is a dominated splitting $T_KM=E^s\oplus E^c\oplus E^u$, such that $E^s$ and $E^u$ are contracted respect to $f$ and $f^{-1}$ respectively, and at least one of $E^s$ and $E^u$ is non-trivial. For a periodic point $p$, a \emph{homoclinic tangency} is a non-transverse intersection between the unstable set $W^u(p)$ and stable set $W^s(p)$ of $p$. Two hyperbolic periodic points $p$ and $q$ with different stable dimensions form a \emph{heterodimensional cycle}, if both $W^s(p)\cap W^u(q)$ and $W^s(q)\cap W^u(p)$ are non-empty. It is well known that a diffeomorphism with a homoclinic bifurcation (homoclinic tangency or heteroclinic cycle) is not hyperbolic.

One would like to understand the long behavior of orbits and concentrates on the sets that have some recurrent properties (chain recurrence, non-wandering, recurrent or periodic). By~\cite{conley}, one decomposes the dynamics into pieces which can also be obtained by \emph{pseudo-orbits}, and each piece is called a \emph{chain recurrence class}. For a constant $\vep>0$, a sequence of points $(x_n)_{n=a}^b$ is called a \emph{$\vep$-pseudo-orbit}, if for any $a\leq n<b$, we have $d(f(x_n),x_{n+1})<\vep$, where $-\infty\leq a<b\leq +\infty$. A point $y$ is called \emph{chain attainable} from $x$, denoted by $x\dashv y$, if for any $\vep>0$, there is a $\vep$-pseudo-orbit $\{x=x_0,x_1,\cdots,x_n=y\}$. The \emph{chain recurrent set} $R(f)$ is the invariant compact set of points $x$ such that $x\dashv x$, where such a point $x$ is called a \emph{chain recurrent point}. On $R(f)$, one can define an equivalent relation $x\sim y$, if and only if $x\dashv y$ and $y\dashv x$. The \emph{chain recurrence class} of $f$ is the equivalent class of $\sim$ on $R(f)$. Obviously, $R(f)$ contains all periodic points of $f$. A chain-recurrent class that contains no periodic point is called an \emph{aperiodic class}.

Recall that a subset $R$ of a topological Baire space $X$ is called a \emph{residual} set, if it contains a dense $G_{\delta}$ set of $X$. A property is a \emph{generic} property of $X$, if there is a residual set $R\subset X$, such that each element contained in $R$ satisfies the property. An invariant compact set $K$ is called \emph{Lyapunov stable}, if for any neighborhood $U$ of $K$, there is another neighborhood $V$ of $K$, such that $f^n(V)\subset U$ for all $n\geq 0$.

In~\cite{bc}, Bonatti and Crovisier proved that a chain recurrence class of a $C^1$ generic diffeomorphism is either a \emph{homoclinic class} (see Definition~\ref{homoclinic class}) or an aperiodic class. In~\cite{c3,csy,y}, it is proved that, any aperiodic class of a $C^1$-generic diffeomorphism that is far from homoclinic bifurcations (or just homoclinic tangencies) is partially hyperbolic with center bundle of dimension one. ~\cite{cp,y1} proves that any aperiodic class of a $C^1$-generic diffeomorphism that is far from homoclinic bifurcations can not be Lyapunov stable. But in~\cite{bd}, it is showed that there is an open set $\mathcal{U}\subset \diff^1(M)$, such that for generic $f\in\mathcal{U}$, there are infinitely many aperiodic classes that are Lyapunov stable both respect to $f$ and $f^{-1}$. In~\cite{po}, Potrie proved that for $C^1$-generic diffeomorphisms, if a homoclinic class is Lyapunov stable both for $f$ and for $f^{-1}$, then it admits a non-trivial dominated splitting, and under some more hypothesis, it is the whole manifold. There are also many other results for Lyapunov stable homoclinic classes in~\cite{po,acco}. In~\cite{c4}, Crovisier raised a conjecture for aperiodic classes, which implies that, $C^1$-generically, if an aperiodic class admits a dominated splitting, then one of the bundle is hyperbolic.

\begin{conj}[\cite{c4}]\label{conjecture of crovisier}
Let $f\in\diff^1(M)$ be a $C^1$-generic diffeomorphism and $\Lambda$ be an aperiodic class of $f$. Assume that $E^s\oplus E^c\oplus E^u$ is the dominated splitting on $\Lambda$ such that $E^s$ (resp. $E^u$) is the maximal contracted (resp. expanded) sub-bundle, then $E^c$ has dimension at least two and admits no non-trivial dominated splitting.
\end{conj}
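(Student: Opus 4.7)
The conjecture has two components: (i) $\dim E^c \geq 2$, and (ii) $E^c$ carries no non-trivial dominated splitting. My plan is to derive both from the main theorem of this paper once the setting is reduced to the Lyapunov stable case, and then to carry out that reduction.

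For (ii), assume $E^c = E_1 \oplus E_2$ is a non-trivial dominated splitting on $\Lambda$. Then $(E^s \oplus E_1) \oplus (E_2 \oplus E^u)$ is a two-bundle dominated splitting on $\Lambda$, and if $\Lambda$ is Lyapunov stable the main theorem forces either $E^s \oplus E_1$ to be contracted (enlarging $E^s$ and contradicting its maximality) or $E_2 \oplus E^u$ to be expanded (contradicting maximality of $E^u$). For (i), if $\dim E^c = 0$ then $\Lambda$ is a chain-transitive hyperbolic set and hence contains periodic orbits by the shadowing lemma, contradicting aperiodicity. If $\dim E^c = 1$, maximality of $E^s$ and $E^u$ implies $E^c$ is neither contracted nor expanded; Pliss' lemma then furnishes points with arbitrarily long orbit segments where the derivative along $E^c$ is almost neutral, and combining Hayashi's connecting lemma with the Bonatti-Crovisier theorem \cite{bc} produces a periodic orbit inside $\Lambda$, again contradicting aperiodicity.

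The principal obstacle is removing the Lyapunov stability assumption, which is not built into the conjecture's hypotheses. The natural bridge is to replace $\Lambda$ by a quasi-attractor $Q$ contained in a filtrating neighborhood of $\Lambda$: the dominated splitting of $\Lambda$ extends to a neighborhood and therefore restricts to $Q$, and by \cite{bc} the class $Q$ is either a homoclinic class --- where one can invoke Potrie's results \cite{po} on Lyapunov stable homoclinic classes --- or an aperiodic class, where the main theorem of this paper applies directly. The delicate point is that the maximality of $E^s$ and $E^u$ on $\Lambda$ may fail to descend to $Q$, so any contradiction obtained on $Q$ must be transferred back to $\Lambda$, presumably via comparing Lyapunov exponents of invariant measures supported on $\Lambda$ and on $Q$. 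This reduction step, together with the periodic-orbit production in the dimension-one center case, is where I expect the bulk of the technical work to lie.
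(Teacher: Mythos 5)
The statement you are proving is stated in the paper as a \emph{conjecture}: the paper does not prove it in general. What it proves is Theorem A, and then deduces (in the paragraph following Theorem A) exactly the argument you give in your first two steps, but only for \emph{Lyapunov stable} aperiodic classes: splitting $E^c=E_1\oplus E_2$ would give a two-bundle dominated splitting $(E^s\oplus E_1)\oplus(E_2\oplus E^u)$, and Theorem A would force one of the extreme bundles to grow, contradicting maximality; the bound $\dim E^c\geq 2$ is not reproved there either, but attributed to the arguments of~\cite{c3,cp}. So the part of your proposal that works is precisely the part the paper already records, and it only works under the Lyapunov stability hypothesis.

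The genuine gap is your reduction step, and it is not a technical delicacy but the whole open problem. Concretely: (a) the quasi-attractor $Q$ inside a filtrating neighborhood of $\Lambda$ is in general a \emph{different} chain recurrence class, and the dominated splitting of $\Lambda$ extends only to the maximal invariant set of a \emph{small} neighborhood $V_0$ of $\Lambda$; the filtrating neighborhood containing $Q$ need not be contained in $V_0$, so there is no reason $Q$ carries any dominated splitting at all, let alone one compatible with $E^s\oplus E^c\oplus E^u$. (b) Even granting a splitting on $Q$, a contradiction derived on $Q$ (via Theorem A or via Potrie's results if $Q$ is a homoclinic class) constrains $Q$, not $\Lambda$: maximality of $E^s$ and $E^u$ is a property of the cocycle over $\Lambda$, and there is no mechanism --- certainly not a comparison of Lyapunov exponents, since invariant measures of $\Lambda$ are not supported on $Q$ and vice versa --- to transport the conclusion back. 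Any argument that successfully removed Lyapunov stability in this way would prove Crovisier's conjecture in full, which remains open; within the scope of this paper you should restrict the claim to Lyapunov stable aperiodic classes, where your first paragraph coincides with the paper's deduction from Theorem A. Your sketch for the case $\dim E^c=1$ has the same structural problem: producing a periodic orbit \emph{inside} $\Lambda$ (rather than merely nearby after perturbation) is exactly where the paper needs Lyapunov stability (unstable sets of points of $\Lambda$ stay in $\Lambda$), plus a Baire argument to return from the perturbed diffeomorphism to $f$ itself, so it cannot be dismissed as a routine application of Pliss points and the connecting lemma.
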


Now we state our main theorem, which claims that a non-trivial dominated splitting on a Lyapunov stable aperiodic class is actually a partially hyperbolic splitting for $C^1$-generic diffeomorphisms. This gives a partial answer to Conjecture~\ref{conjecture of crovisier}.

\begin{theoa}
There is a residual subset $\mathcal{R}\subset \diff^1(M)$, such that, for any $f\in\mathcal{R}$, if a Lyapunov stable aperiodic class $\Lambda$ of $f$ admits a dominated splitting $T_{\Lambda}M=E\oplus F$, then either $E$ is contracted or $F$ is expanded.
\end{theoa}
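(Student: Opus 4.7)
The plan is to argue by contradiction: suppose $\Lambda$ is a Lyapunov stable aperiodic class of a $C^1$-generic $f$ carrying a dominated splitting $E\oplus F$ such that $E$ is not contracted and $F$ is not expanded, and derive that $\Lambda$ must contain a periodic orbit. The starting observation is that any periodic orbit $p$ with $W^u(p)\cap \Lambda\neq\emptyset$ must itself lie in $\Lambda$: the backward orbit of $x\in W^u(p)\cap\Lambda$ is contained in the closed $f$-invariant set $\Lambda$ and converges to $p$, so $p\in\Lambda$. Hence it suffices to produce, by an arbitrarily small perturbation, a periodic orbit whose unstable manifold meets $\Lambda$. The two failure modes ``$E$ not contracted'' and ``$F$ not expanded'' are not related by time reversal, since Lyapunov stability is one-sided, and must be treated by parallel constructions.

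The first step is to produce a periodic orbit with the ``wrong'' index. The failure of uniform contraction of $E$ means, by convexity, that $\mu\mapsto\int\log\|Df|_{E}\|\,d\mu$ is not bounded above by a negative constant on $\mathcal{M}_f(\Lambda)$, so ergodic decomposition yields an ergodic $\mu\in\mathcal{M}_f(\Lambda)$ whose top Lyapunov exponent along $E$ is nonnegative. Applying Ma\~n\'e's ergodic closing lemma together with the Bonatti-Crovisier connecting lemma for pseudo-orbits and $C^1$-genericity, one obtains a sequence of periodic orbits $p_n$ of $f$ Hausdorff-converging to $\mathrm{supp}(\mu)\subset\Lambda$; since the dominated splitting extends to a neighborhood of $\Lambda$, the Lyapunov exponents of $p_n$ along the extension of $E$ converge to those of $\mu$, giving $\mathrm{ind}(p_n)\leq \dim E-1$ for all large $n$. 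A dual construction based on the non-expansion of $F$ produces periodic orbits $q_n$ with $\mathrm{ind}(q_n)\geq \dim E+1$.

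The second step uses Lyapunov stability to drive $W^u(p_n)$ into $\Lambda$. Generically a Lyapunov stable chain class is a quasi-attractor, so $\Lambda$ admits a nested basis of filtrating neighborhoods $U_k$ with $f(\overline{U_k})\subset U_k$; for large $n$, $p_n\in U_k$ and forward invariance gives $W^u(p_n)\subset U_k$, while the quasi-attractor structure pushes $\omega$-limits in $U_k$ toward $\Lambda$. Combining Hayashi's connecting lemma with Gourmelon's dominated-splitting-preserving refinement of Franks' lemma, I perform a small $C^1$-perturbation $g$ of $f$ producing a true $g$-orbit from the continuation $p_{n,g}$ into the continuation $\Lambda_g$. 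This yields $z\in W^u(p_{n,g},g)$ with a forward iterate in $\Lambda_g$, so by invariance $z\in\Lambda_g$, and by the preliminary observation $p_{n,g}\in\Lambda_g$. A Baire category argument on the open condition ``$\Lambda_g$ contains a periodic orbit'' transfers the conclusion back to $f$ itself, contradicting aperiodicity; the case of non-expansion of $F$ is handled analogously with $q_n$.

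The main obstacle is the connection step: the Bonatti-Crovisier connecting lemma applied without additional structure tends to produce chains lying inside the chain recurrence class of $p_n$ rather than reaching the quasi-attractor $\Lambda$, and parasitic chain classes strictly between $p_n$ and $\Lambda$ generically exist (cf.~\cite{bd}). The crucial leverage is the index mismatch $\mathrm{ind}(p_n)<\dim E$ together with the extended dominated splitting on $U_k$: the unstable direction of $p_n$ strictly contains $F(p_n)$ and therefore picks up a direction of $E(p_n)$, yielding a ``strong'' unstable direction transverse to the center-stable plaque family associated with $E$. This transversality is what drives the perturbed unstable manifold across the plaques all the way into $\Lambda_g$ rather than letting it be trapped in a parasitic sub-chain-class. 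Executing the perturbation while simultaneously preserving (a) the Lyapunov stability of $\Lambda_g$, (b) the dominated splitting, and (c) the index of $p_n$ is the technical heart of the proof.
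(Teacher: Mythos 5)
Your overall contradiction framework and the elementary observation that $W^u(p)\cap\Lambda\neq\emptyset$ forces $orb(p)\subset\Lambda$ are correct, and the general flavour (produce periodic orbits, then feed them into $\Lambda$) matches the paper. But the route is genuinely different, and the step you yourself flag as the main obstacle is, in fact, a real gap rather than a surmountable technicality.

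Concretely: you propose to (a) use Ma\~n\'e's ergodic closing lemma to produce periodic orbits $p_n$ of index $\leq\dim E-1$ approximating an ergodic measure with a non-negative exponent in $E$, and then (b) push $W^u(p_n)$ into $\Lambda$ by exploiting the quasi-attractor structure together with an ``index mismatch gives a strong unstable direction transverse to the center-stable plaque family'' argument. Step (b) does not work as stated. Transversality to a plaque family tangent to $E$ tells you that the local unstable leaf through $p_n$ crosses plaques, but it gives no control on where iterates of that leaf accumulate; in particular it does not prevent the whole unstable set of $p_n$ from closing up inside a parasitic chain class strictly between $orb(p_n)$ and $\Lambda$. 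There is no known mechanism by which a wrong-index periodic saddle outside $\Lambda$ is forced to have unstable manifold accumulating on $\Lambda$, even inside a filtrating neighborhood; the quasi-attractor property only says something about $\omega$-limit sets of \emph{points}, not about unstable manifolds. Also, a smaller point at step (a): a measure whose top exponent along $E$ is exactly $0$ may well be shadowed by periodic orbits of index $\dim E$; to guarantee index $\leq\dim E-1$ you would have to first push the exponent strictly positive with a Franks-type perturbation, and that needs to be said.

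The paper avoids this entirely by never trying to produce wrong-index orbits. Using both failure assumptions \emph{together} (non-expansion of $F$ supplies the weak point $b$ needed for Liao's selecting lemma; non-contraction of $E$ guarantees $\Lambda$ is not a single $F$-weak set), it locates a $\lambda_2$-bi-Pliss point $x\in\Lambda$ with $\omega(x)$ a $\lambda_1$-$F$-weak set and $x\notin\omega(x)$ (Lemma~\ref{pliss point x and omega x}). Remark~\ref{F-weak implies E-uniform} makes $\omega(x)$ uniformly $E$-contracted, and then the tailored connecting statement (Proposition~\ref{asymptotic connecting}) produces, after perturbation, periodic orbits $q_n$ that pass arbitrarily close to $x$ while spending a controlled fraction of their time near $\omega(x)$; the time-fraction control is exactly what makes $\prod\|Dg|_E\|<\lambda_1^{\tau}$ along $orb(q_n)$. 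A Baire argument brings these $q_n$ back to $f$ itself. The $q_n$ are $E$-contracted, hence by Pliss's lemma carry $\lambda_1$-$E$-Pliss points with stable manifolds of uniform size of dimension $\dim E$; meanwhile $x$ (or a limit of Pliss points in Case~2) is a bi-Pliss point of $\Lambda$ with $W^u$ of uniform size. The intersection $W^u(y)\cap W^s(q_n)\neq\emptyset$ is then a transversality-of-uniform-discs argument, and Lyapunov stability of $\Lambda$ is what forces $W^u(y)\subset\Lambda$, giving $orb(q_n)\subset\Lambda$. Note this uses Lyapunov stability in a sharper way than your ``quasi-attractor'' step: it directly places a \emph{point} of $W^s(q_n)$ inside $\Lambda$, rather than trying to steer $W^u(p_n)$ toward $\Lambda$ from outside.

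To salvage your plan you would essentially have to reinvent this intersection argument: rather than seeking wrong-index orbits outside $\Lambda$ and hoping their unstable manifolds reach $\Lambda$, seek $E$-contracted periodic orbits that shadow a piece of $\Lambda$ and pass near a bi-Pliss point $x\in\Lambda$, then intersect $W^s$ of the orbit with $W^u(x)\subset\Lambda$. The key technical input your sketch lacks is an analogue of Proposition~\ref{asymptotic connecting}: a connecting lemma that controls not only that a periodic orbit visits a prescribed small ball but also that it spends a uniform proportion of time in a prescribed neighborhood, which is what yields the $E$-contraction estimate on the closed orbit.
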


As a consequence of Theorem A, for $C^1$-generic diffeomorphisms, if we consider the dominated splitting $E^s\oplus E^c\oplus E^u$ on a Lyapunov stable aperiodic class, such that $E^s$ (resp. $E^u$) is the maximal contracted (resp. expanded) sub-bundle, then the sub-bundle $E^c$ admits no non-trivial dominated splitting. Moreover, with the arguments of~\cite{c3,cp}, one knows that the dimension of $E^c$ is at least two. Hence Conjecture~\ref{conjecture of crovisier} holds for Lyapunov stable aperiodic classes.

We point out that the statement of Theorem A fails for homoclinic classes, which shows that the aperiodicity is an essential assumption. ~\cite{bv} constructs a robustly transitive diffeomorphism of $\mathbb{T}^4$, hence the whole manifold is a bi-Lyapunov stable homoclinic class, and it admits only one dominated splitting $E\oplus F$ with $dim(E)=2$. Moreover, there are periodic saddles of all possible stable dimensions, which implies that neither $E$ is contracted nor $F$ is expanded.

In~\cite{bgy,w}, it is proved for homoclinic classes (see Definition~\ref{homoclinic class}) that one bundle is hyperbolic under some assumptions for the other bundle and for the periodic orbits. By Theorem A, we can get that one (only one actually) of the two bundles $E$ and $F$ is hyperbolic, but we do not know which one it is. For any point $x$ contained in a Lyapunov stable chain recurrence class, the unstable set of $x$, $W^u(x)=\{y\in M: \lim_{n\rightarrow+\infty}d(f^{-n}(x),f^{-n}(y))=0\}$ is also contained in the class. Hence if the bundle $F$ is expanded, by~\cite{hps}, the Lyapunov stable aperiodic class is foliated by unstable manifolds that are tangent to $F$, and thus it can not be minimal. We conjecture that such phenomenon can not happen.

\begin{conj}\label{hyperbolic bundle}
For $C^1$-generic $f\in\diff^1(M)$, if a Lyapunov stable aperiodic class $\Lambda$ of $f$ admits a dominated splitting $T_{\Lambda}M=E\oplus F$, then the bundle $E$ is contracted.
\end{conj}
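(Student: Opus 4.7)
The plan is to argue by contradiction, using Theorem A as a reduction. Assume $\Lambda$ is a Lyapunov stable aperiodic class carrying the dominated splitting $T_{\Lambda}M=E\oplus F$, but $E$ is not contracted. By Theorem A, the other bundle $F$ must then be uniformly expanded, so the task is to derive a contradiction from the configuration ``$\Lambda$ is a Lyapunov stable aperiodic class on which $F$ is uniformly expanded''.

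First I would extract the geometric structure produced by the expansion of $F$. By Hirsch--Pugh--Shub there is a locally $f$-invariant foliation $\mathcal{F}^u$ tangent to $F$ on a neighborhood of $\Lambda$, and since $\Lambda$ is Lyapunov stable, the full unstable set $W^u(x)$ of every $x\in\Lambda$ lies in $\Lambda$ (the argument is recalled in the excerpt right after Theorem~A), so $\Lambda$ is saturated by complete unstable leaves tangent to $F$. The decisive step is then to produce a periodic orbit literally inside $\Lambda$, which contradicts aperiodicity. Concretely I would pick an ergodic invariant probability measure $\mu$ on $\Lambda$ and apply Ma\~{n}\'{e}'s ergodic closing lemma at $\mu$-typical points to obtain, after an arbitrarily small $C^1$-perturbation $g$ of $f$, a periodic orbit $\gamma$ of $g$ of index $\dim E$ shadowing a generic orbit (the index is forced by persistence of the domination together with the uniform expansion of $F$), and then use the Bonatti--Crovisier connecting lemma for pseudo-orbits together with the $\mathcal{F}^u$-saturation of $\Lambda$ to push $\gamma$ into the continuation of $\Lambda$ by a further small perturbation. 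A standard Baire-category argument would then place the contradiction in a residual subset of $\diff^1(M)$.

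The main obstacle is the final push: ensuring that the perturbed periodic orbit $\gamma$ lies in the chain recurrence class that continues $\Lambda$, rather than in some nearby chain class. This is exactly the classical difficulty in transferring generic statements from individual points to whole chain recurrence classes, and it is the same issue that has so far kept the aperiodic-class analogue of Pugh's closing lemma out of reach. A natural intermediate goal would be to prove, $C^1$-generically, that an unstable foliation saturating a Lyapunov stable chain class forces a periodic orbit to enter that class; this would require a closing-type lemma adapted to the unstable foliation of $\Lambda$, which does not appear to be available in the current literature. Presumably this is why the statement is recorded only as a conjecture.
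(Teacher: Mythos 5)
The statement you are attacking is not a theorem of the paper but an open conjecture (Conjecture~\ref{hyperbolic bundle}), and the paper offers no proof of it; the only text surrounding it is a two-line heuristic reduction. Your write-up is therefore not being measured against a proof but against the paper's own acknowledgement that the statement is open, and in that light your assessment is accurate and honest: you correctly invoke Theorem~A to reduce to ruling out the configuration ``$F$ uniformly expanded'', you correctly recall (as the paper does, right before the conjecture) that expansion of $F$ together with Lyapunov stability forces $\Lambda$ to be saturated by complete $F$-tangent unstable leaves and hence non-minimal, and you correctly identify the genuine gap: there is at present no closing-type mechanism that guarantees the periodic orbit produced by an ergodic closing or connecting argument lands \emph{inside} the continuation of $\Lambda$ rather than in some nearby chain class. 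That is precisely the obstruction that keeps the statement at the level of a conjecture.

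To be explicit about what is missing in your sketch: the ergodic closing lemma and the Bonatti--Crovisier connecting lemma for pseudo-orbits do place a periodic orbit in any prescribed neighborhood of $\Lambda$, but neither controls which chain recurrence class that orbit falls into after the perturbation, and the Baire argument you invoke at the end only transports to the unperturbed $f$ the existence of periodic orbits \emph{near} $\Lambda$ --- which is already known and harmless, since a generic aperiodic class is Hausdorff-approximated by periodic orbits by Theorem~\ref{generic properties}. The $\mathcal{F}^u$-saturation of $\Lambda$ is a promising structural input, but you do not explain how it would be used to trap the shadowing periodic orbit inside $\Lambda$; one would need something like a uniform local product structure for $\Lambda$ along $\mathcal{F}^u$ together with a closing lemma respecting that structure, and no such tool is in the literature. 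So the proposal is a reasonable reconnaissance of the problem, not a proof, and you say as much yourself.
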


Bonatti and Shinohara have a programme to construct (Lyapunov stable) aperiodic classes with a non-trivial dominated splitting. Since aperiodic classes are not isolated, such examples are not easy to construct, even for non-isolated homoclinic classes. In a recent paper, they build non-isolated Lyapunov stable homoclinic classes with a non-trivial dominated splitting on any 3-manifold, see~\cite{bs2}. The main tool is the notion of flexible periodic points introduced in~\cite{bs1}.

Actually, each of the bi-Lyapunov stable aperiodic classes constructed in~\cite{bd} is a minimal Cantor set, and admits no non-trivial dominated splitting. Hence we have the second conjecture that is obviously true if Conjecture~\ref{hyperbolic bundle} is true.

\begin{conj}\label{no domination for bi-stable class}
For $C^1$-generic $f\in\diff^1(M)$, if an aperiodic class $\Lambda$ of $f$ is Lyapunov stable both respect to $f$ and $f^{-1}$, then it admits no non-trivial dominated splitting.
\end{conj}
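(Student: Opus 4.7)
The plan is a proof by contradiction: assume there is $f$ in a suitable residual set $\mathcal{R}\subset\diff^1(M)$ carrying a Lyapunov stable aperiodic class $\Lambda$ with a dominated splitting $T_{\Lambda}M=E\oplus F$ such that $E$ is not contracted and $F$ is not expanded, and derive that $\Lambda$ must already contain a periodic point of $f$, contradicting aperiodicity. The residual set $\mathcal{R}$ I would work with bundles together the standard generic conditions: the Bonatti--Crovisier dichotomy that every chain recurrence class is either a homoclinic class or an aperiodic class, Kupka--Smale, upper semi-continuity of chain recurrence classes (and lower semi-continuity of the filtrating neighborhoods provided by Lyapunov stability), and the ``creating by perturbation $=$ creating without perturbation'' Baire-type principle for periodic orbits of a prescribed index inside a prescribed trapping region.

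The first main step is to extract, from the non-hyperbolicity hypothesis, non-hyperbolic ergodic measures supported on $\Lambda$. Because $E$ is not contracted, a Pliss-type argument applied to $\log\|Df^n|_E\|$ together with compactness of the space of invariant probability measures on $\Lambda$ produces an ergodic measure $\mu_E$ whose top Lyapunov exponent along $E$ is $\geq 0$; by domination, all Oseledets exponents along $F$ for $\mu_E$ are strictly larger, and in particular $\mu_E$ has a zero central exponent. Symmetrically, the failure of $F$ to be expanded yields an ergodic measure $\mu_F$ with a zero exponent along $F$ and strictly smaller exponents along $E$.

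Second, I would combine Ma\~{n}\'e's ergodic closing lemma with Gourmelon's precise form of Franks' lemma to replace $\mu_E$ (or $\mu_F$) by an actual periodic orbit of a $C^1$-small perturbation $g$ of $f$. One extracts a periodic point $p_g$ of $g$ whose orbit shadows a $\mu_E$-typical trajectory, whose derivative along the orbit respects the extended dominated splitting $E\oplus F$, and whose Lyapunov exponents can be prescribed to realise the spectrum of $\mu_E$ up to a tiny error; in particular the minimal $E$-exponent of $p_g$ can be made $\geq 0$. Here Lyapunov stability is decisive: a small open trapping neighborhood $U$ of $\Lambda$ remains trapping for every $g$ close enough to $f$, so the whole orbit of $p_g$ lies in $U$ and the chain recurrence class of $p_g$ for $g$ is contained in $U$ and inherits Lyapunov stability.

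The last step is a Baire-category argument that translates the perturbation statement into a statement about $f$ itself. One fixes a countable basis of precompact open sets $U$ and a countable choice of indices $k$, and shows that the set of $f$ for which the following holds is $G_\delta$-dense: for every Lyapunov stable chain recurrence class of $f$ contained in $U$ and carrying an $(m,\lambda)$-dominated splitting $E\oplus F$ of dimension $k$, either $E$ is contracted, or $F$ is expanded, or the class contains a periodic orbit of index $k$ or $k+\dim F$. Density comes from the perturbation scheme above, since arbitrarily small $C^1$-perturbations produce such a periodic orbit inside $U$; the $G_\delta$-ness is the standard one for ``$f$ has a periodic orbit of index $j$ inside $U$ belonging to a chain recurrence class meeting a prescribed set''. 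Intersecting over the basis and taking $U$ to shrink to $\Lambda$ forces a hyperbolic periodic orbit into $\Lambda$, the desired contradiction. The main obstacle I expect is precisely this Baire step, because aperiodic classes have no genuine continuation: one must formulate carefully what it means for the periodic orbit produced for $g$ to lie ``in the class of $\Lambda$ for $f$ in the limit'', and exploit Lyapunov stability of $\Lambda$ (through persistence of trapping regions) to guarantee that the limiting periodic orbit cannot escape into another chain recurrence class. A secondary technical point is verifying that the normalising perturbations used in the second step can be chosen small enough to preserve both the dominated splitting on $U$ and the trapping property of $U$ simultaneously.
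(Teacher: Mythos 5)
This statement is explicitly a \emph{conjecture} in the paper, not a theorem: the author offers no proof and remarks only that it would follow from Conjecture~\ref{hyperbolic bundle}. So there is no ``paper's own proof'' to compare against, and any purported proof should be examined with that in mind. More importantly, your proof by contradiction starts from the wrong hypothesis. You assume that $\Lambda$ carries a dominated splitting $E\oplus F$ with $E$ not contracted \emph{and} $F$ not expanded, and aim to produce a periodic point in $\Lambda$. But the negation of ``$\Lambda$ admits no non-trivial dominated splitting'' is simply ``$\Lambda$ admits some non-trivial dominated splitting $E\oplus F$''; it does \emph{not} entitle you to assume that neither bundle is hyperbolic. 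Theorem A of the paper already handles exactly the case you treat (neither bundle hyperbolic forces a periodic orbit into the class), and its conclusion is only that one of $E$, $F$ is hyperbolic. To get Conjecture~\ref{no domination for bi-stable class} one must further exclude the possibility that, say, $E$ is uniformly contracted while $F$ is not expanded and $\Lambda$ is still aperiodic. Bi-Lyapunov stability should be exploited there (e.g.\ by applying a Theorem-A-type statement to $f^{-1}$ as well), but that requires knowing \emph{which} bundle is hyperbolic — precisely the content of the still-open Conjecture~\ref{hyperbolic bundle}. Your argument nowhere uses Lyapunov stability with respect to $f^{-1}$, which signals that it cannot possibly prove the bi-stable statement.

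Two further technical remarks. First, even for what you are actually proving (Theorem A), your route is genuinely different from the paper's: you invoke Ma\~n\'e's ergodic closing lemma plus Gourmelon's Franks lemma, whereas the paper uses Liao's selecting lemma and the $C^1$ connecting lemma (via Proposition~\ref{asymptotic connecting}) to directly manufacture periodic orbits with controlled $E$-contraction near $\omega(x)$. Second, the step ``the top Lyapunov exponent along $E$ is $\geq 0$, hence $\mu_E$ has a zero central exponent'' is not correct as stated: an ergodic measure with $\int\log\|Df|_E\|\,d\mu\geq 0$ may well have all Lyapunov exponents strictly positive, so no zero exponent. You would need an extra ingredient (a Franks-type perturbation to flatten an exponent, or a selection of measures along the lines of Liao/Pliss) to obtain a genuinely non-hyperbolic periodic orbit. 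This is one of the reasons the paper goes through Liao's selecting lemma rather than the ergodic closing lemma.
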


There is a result related to Conjecture~\ref{no domination for bi-stable class}, see~\cite{zh}. It is proved that, if a diffeomorphism is minimal, hence the whole manifold is a bi-Lyapunov stable aperiodic class, then it admits no dominated splitting.

To prove Theorem A, we have to show that under the hypothesis, any chain recurrence class contains a periodic point if neither of the two bundles in the domination is hyperbolic. We have to use the following proposition to obtain periodic orbits that spend most of the time close to an invariant compact set and visit a small neighborhood of a point. Recall that an invariant compact set $K$ is called a \emph{chain transitive set}, if for any $\vep>0$, there is a periodic $\vep$-pseudo-orbit contained in $K$ and $\vep$-dense in $K$.

\begin{prop}\label{asymptotic connecting}
For $C^1$-generic $f\in\diff^1(M)$, assume $K$ is a chain transitive set of $f$ and $x\in K$. If $x\notin \alpha(x)$, then for any $C^1$-neighborhood $\mathcal{U}$ of $f$, any neighborhood $U$ of $\alpha(x)$, any neighborhood $U_x$ of $x$ and any neighborhood $U_K$ of $K$, there is an integer $L\in\mathbb{N}$, with the following property. For any integer $m\in\mathbb{N}$, there is a diffeomorphism $g\in\mathcal{U}$ with a periodic point $p\in U_x$ whose orbit is contained in $U_K$, satisfying that:
\begin{itemize}
\item $\#(orb(p,g)\cap U)\geq m$,
\item $\#(orb(p,g)\setminus U)\leq L$.
\end{itemize}
\end{prop}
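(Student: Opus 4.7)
The strategy is two-fold. First, construct a periodic $\varepsilon$-pseudo-orbit $\pi$ through $x$ lying in $U_K$, with at least $m$ points in $U$ and at most some $L$ points outside $U$, where $L$ is independent of $m$. Second, realize $\pi$ as a genuine periodic orbit of a $C^1$-nearby diffeomorphism $g\in\mathcal{U}$ via the $C^1$-generic connecting lemma for pseudo-orbits of Bonatti--Crovisier~\cite{bc}. Two classical facts will be used throughout: \emph{(i)} the $\alpha$-limit set $\alpha(x)$ is internally chain transitive, so for every $y\in\alpha(x)$, every $\delta>0$ and every $M\in\mathbb{N}$ there is a periodic $\delta$-pseudo-orbit in $\alpha(x)$ through $y$ of length at least $M$; \emph{(ii)} by definition of $\alpha(x)$, there exists $J_0\in\mathbb{N}$ with $f^{-j}(x)\in U$ for all $j\geq J_0$, and for any $y'\in\alpha(x)$ there are arbitrarily large $N$ with $f^{-N}(x)$ as close as we wish to $y'$.

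\textbf{Construction of $\pi$.} Fix $\varepsilon_1>0$ such that the $\varepsilon_1$-neighborhood of $\alpha(x)$ lies in $U$ and the $\varepsilon_1$-neighborhood of $K$ lies in $U_K$; then take $\varepsilon\ll\varepsilon_1$ small enough for the connecting lemma below. Using chain transitivity of $K$, form an $\varepsilon$-pseudo-orbit $P_1=(x=z_0,z_1,\dots,z_l)$ in $K$ whose endpoint $z_l$ is close to a chosen point $y\in\alpha(x)$; the length $l$ depends on $\varepsilon$ but not on $m$. By \emph{(i)}, extend with a periodic $\varepsilon$-pseudo-orbit $P_2=(y=w_0,w_1,\dots,w_M)\subset\alpha(x)$ of length $M\geq m$, arranging $w_M$ to be $\varepsilon$-close to $f^{-N}(x)$ for a suitably large $N$ supplied by \emph{(ii)}. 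Close the loop with the genuine $f$-orbit segment $P_3=(f^{-N}(x),f^{-N+1}(x),\dots,f^{-1}(x),x)$. The concatenation $\pi=P_1\cdot P_2\cdot P_3$ is a periodic $\varepsilon$-pseudo-orbit through $x$ lying in the $\varepsilon_1$-neighborhood of $K$, hence in $U_K$. Its points in $U$ include all of $P_2$ together with the tail $\{f^{-j}(x):j\geq J_0\}$ of $P_3$ -- at least $M\geq m$ in total; the points possibly outside $U$ are confined to $P_1$ and the initial segment $\{f^{-j}(x):0\leq j<J_0\}$ of $P_3$, totalling at most $L:=l+J_0+1$, which depends on $\mathcal{U},U,U_x,U_K,f,K,x$ but not on $m$.

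\textbf{Realization and main obstacle.} The loop $\pi$ has exactly two ``jumps'' (from $z_l$ to $w_0=y$ and from $w_M$ to $f^{-N}(x)$), each of size at most $\varepsilon$; elsewhere it coincides with $\varepsilon$-pseudo-orbit steps or with genuine $f$-orbit steps. Applying the Bonatti--Crovisier connecting lemma for pseudo-orbits~\cite{bc} in its $C^1$-generic form, valid on the residual set on which we work, and choosing $\varepsilon$ small enough relative to $\mathcal{U}$, produces a diffeomorphism $g\in\mathcal{U}$ admitting a genuine periodic orbit that $\varepsilon_1$-shadows $\pi$. This orbit meets $U_x$, lies in $U_K$, and inherits the two counting bounds. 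The principal difficulty and main obstacle of the proof is exactly this realization step: one must place the perturbation supports for the two jumps in disjoint boxes, combine them into a single element of $\mathcal{U}$, and ensure that the resulting shadowing is tight enough to preserve the integer counts $\#(\mathrm{orb}(p,g)\cap U)\geq m$ and $\#(\mathrm{orb}(p,g)\setminus U)\leq L$. This is precisely what the $C^1$-generic connecting lemma of \cite{bc} is designed to deliver; the construction of $\pi$ itself is essentially combinatorial.
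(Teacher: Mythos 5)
Your plan identifies the right combinatorial skeleton (a long pseudo-orbit spending most of its time in $\alpha(x)$, closed up through the tail of $\mathrm{orb}^-(x)$), and you correctly diagnose where the difficulty lies. But the realization step as you state it is not a consequence of the Bonatti--Crovisier connecting lemma for pseudo-orbits, and this is a genuine gap, not a technicality.

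What the BC results give (the ones in the paper's generic-properties theorem: $x\dashv_K y \Leftrightarrow x\prec_K y$, and Hausdorff approximation of chain transitive sets by periodic orbits) is a periodic orbit whose \emph{support} is Hausdorff-close to the pseudo-orbit trajectory after a $C^1$-perturbation. They do \emph{not} give a time-parametrized $\varepsilon_1$-shadowing that lets you transfer the counting bounds from the pseudo-orbit $\pi$ to the realized orbit $\mathrm{orb}(p,g)$. Your pseudo-orbits $P_1$ and $P_2$ each contain many small jumps, each of which requires its own perturbation box; the number of iterates the realized orbit spends inside each box, and indeed whether the realized orbit traverses a segment of $\pi$ once, several times, or takes a shortcut, is not controlled by the length of $\pi$. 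In particular neither of the two estimates you need --- at least $m$ points in $U$, and at most $L$ points outside $U$ with $L$ independent of $m$ --- follows from a Hausdorff-type shadowing statement. You flag this as ``the main obstacle'' but then assert that BC ``is designed to deliver'' exactly this; it is not, and no quoted or proved lemma bridges the gap.

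The paper circumvents this precisely by \emph{not} working with pseudo-orbits at the perturbation step. It first upgrades chain attainability to genuine orbit attainability using the generic equivalence $\dashv_K = \prec_K$ together with Lemma~\ref{prec}, producing a point $y$ near $\alpha(x)$ with $\mathrm{orb}^+(y)\subset V\subset U$ and genuine $f$-orbit segments $x\prec_{U_K} y\prec_V \alpha(x)$. Only then does it apply the \emph{classical} connecting lemma (Theorem~\ref{connecting lemma}), at exactly three pairwise disjoint perturbation supports $\bigcup_{i=0}^{N}f^i(V_x)$, $\bigcup_{i=0}^{N}f^i(V_y)$, $\bigcup_{i=0}^{N}f^i(V_z)$, each of size at most $N$. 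The classical lemma gives explicit containment: the connected orbit piece lies in $\bigcup_{i}\{f^i(p)\}\cup\bigcup_{i=0}^{N-1}f^i(V_x)\cup\bigcup_{i}\{f^{-i}(q)\}$ with length at most $n_p+N+n_q$. This is the source of the uniform bound $L=n_0+n_1+N$ on the number of points outside $V$, and the segment $\{f^{-n_0}(x),\dots,f^{-n_0-m}(x)\}\subset V$ (which is left untouched by the perturbations) supplies the $\geq m$ points inside $U$. A second feature of the paper's proof that your plan ignores is the case split on whether $\alpha(x)$ contains a non-periodic point: the classical connecting lemma forbids centering a perturbation box at a periodic point of period $\leq N$, so when $\alpha(x)$ is a hyperbolic periodic orbit the paper runs a different argument (produce a transverse homoclinic point of $q$ and invoke the $\lambda$-lemma). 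If you want to salvage a pseudo-orbit route you would need to state and prove a quantitative, time-parametrized shadowing version of the BC pseudo-orbit closing lemma; as written, the claim is unsupported.
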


\begin{rem}\label{rem of asymptotic connecting}
$(1)$ Clearly, if we replace $\alpha(x)$ by $\omega(x)$ in the hypothesis, the conclusions are still valid. We point out here that, in the proof of Theorem A, we use the assumption that $x\notin \omega(x)$. But to simplify the notations, we prove Proposition~\ref{asymptotic connecting} under the assumption $x\notin \alpha(x)$.

$(2)$ For the proof of Theorem A, we only have to consider the case where $K$ contains no periodic point. But to give a general statement of Proposition~\ref{asymptotic connecting}, we will also prove the case when $K$ contains periodic points.
\end{rem}

\section*{Acknowledgements}

The author is grateful to Sylvain Crovisier and Dawei Yang for carefully listening to the proof and for the useful discussions. Sylvain Crovisier gave me lots of guidance in the writing and Dawei Yang proposed Conjecture~\ref{no domination for bi-stable class} to me. The author would like to thank Lan Wen, Shaobo Gan, Rafael Potrie and Jinhua Zhang for carefully reading the proof and for the useful comments. The author would also like to thank University Paris-Sud 11 for the hospitality and China Scholarship Council (CSC) for financial support (201306010008).

\section{Preliminary}

We give some definitions and known results in this section.

\subsection{Decomposition of dynamics}

Periodic points have the best recurrent property. One can decompose the closure of hyperbolic periodic points by a relation called \emph{homoclinic relation}.

\begin{defi}\label{homoclinic class}
Assume $p$ and $q$ are two hyperbolic periodic points of a diffeomorphism $f$. They are called \emph{homoclinically related}, if $W^u(orb(p))$ and $W^s(orb(p))$ have non empty transverse intersections with $W^s(orb(q))$ and $W^u(orb(q))$ respectively. The \emph{homoclinic class} of $p$, denoted by $H(p,f)$ (or $H(p)$ if there is no confusion), is the closure of the set of periodic points that are homoclinically related to $p$.
\end{defi}

We give a relation $\prec$ that is first introduced in~\cite{a} and~\cite{gw}, also see~\cite{c1}.

\begin{defi}
Consider a diffeomorphism $f\in\diff^1(M)$. For any two points $x,y\in M$, we denote $x\prec y$ if for any neighborhood $U$ of $x$ and any neighborhood $V$ of $y$, there are a point $z\in M$ and an integer $n\geq 1$, such that $z\in U$ and $f^n(z)\in V$.
\end{defi}

Sometimes one has to localize the dynamics for the two relations $\dashv$ and $\prec$, see~\cite{c1}.

\begin{defi}
Consider a diffeomorphism $f\in\diff^1(M)$. Assume that $K$ is a compact set of $M$ and $W$ is an open set of $M$. We denote $x\prec_W y$ if for any neighborhood $U$ of $x$ and any neighborhood $V$ of $y$, there is a piece of orbit $(z,f(z),\cdots,f^n(z))$ contained in $W$ such that $z\in U$ and $f^n(z)\in V$. We denote $x\prec_K y$, if for any neighborhood $U$ of $K$, one has $x\prec_U y$. For a point $x$ and a compact set $\Lambda$, we denote $x\prec \Lambda$ (resp. $x\prec_{W} \Lambda$ and $x\prec_{K} \Lambda$), if for any point $y\in \Lambda$, we have $x\prec y$ (resp. $x\prec_{W} y$ and $x\prec_{K} y$). Similarly, we define $x\dashv_{W} y$ and $x\dashv_{K} y$ (resp. $x\dashv_{W} \Lambda$ and $x\dashv_{K} \Lambda$).
\end{defi}

For the relation $\prec$, we have the following result, see Lemma 6 in~\cite{c1}.

\begin{lem}\label{prec}
Assume that $K$ is an invariant compact set, then for any neighborhoods $U_2\subset U_1$ of $K$ and any point $y\in U_1$ satisfying $y\prec_{U_1} K$, there is a point $y'\in U_2$, such that $y\prec_{U_1} y'\prec_{U_2} K$ and the positive orbit of $y'$ is contained in $U_2$.
\end{lem}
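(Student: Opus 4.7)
The idea is to realise $y'$ as an accumulation point of ``final entry points'' of orbit segments travelling from near $y$ into a trapping neighbourhood of $K$ while remaining in $U_1$.

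I would first choose an auxiliary open neighbourhood $V$ of $K$ with $\overline V\subset U_2$, together with a countable subset $\{z_j\}_{j\geq 1}\subset K$ dense in $K$, and arrange an enumeration $(z^*_n)_{n\geq 1}$ in which each $z_j$ appears infinitely often. Using the hypothesis $y\prec_{U_1} z^*_n$, for each $n$ I pick an orbit segment
$$
\gamma_n=(w_n,f(w_n),\ldots,f^{k_n}(w_n))\subset U_1
$$
with $w_n\in B(y,1/n)$ and $f^{k_n}(w_n)\in B(z^*_n,\delta_n)$ for a sequence $\delta_n\downarrow 0$ chosen small enough that, by uniform continuity of iterates of $f$ together with the invariance of $K$, the extended segment $(w_n,\ldots,f^{k_n+N_n}(w_n))$ is still contained in $U_1$ and its final $N_n+1$ iterates lie in $V$, where $N_n\to\infty$. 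I then define $e_n$ to be the earliest iterate of $w_n$ along this extended segment such that the orbit stays in $V$ from $e_n$ onward until the end, and by compactness of $\overline V$ extract a subsequence along which $e_n\to y'\in\overline V\subset U_2$.

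The first two conditions are then immediate. For any fixed $j\geq 0$, one has $f^j(e_n)\in V$ as soon as $N_n\geq j$, so by continuity $f^j(y')\in\overline V\subset U_2$, which gives the positive orbit of $y'$ inside $U_2$. The sub-segment of $\gamma_n$ from $w_n$ to $e_n$ lies in $U_1$, and since $w_n\to y$, $e_n\to y'$, passing to the limit yields $y\prec_{U_1} y'$; the degenerate possibility $e_n=w_n$ forces $y\in\overline V$, which I exclude by shrinking $V$ in the case $y\notin K$, and handle separately in the case $y\in K$ using $y\prec_{U_1} y$ extracted from $y\prec_{U_1}K$. The remaining condition $y'\prec_{U_2}K$ is proved as follows: given $z\in K$ and neighbourhoods $O_{y'}$ of $y'$, $O_z$ of $z$, pick $z_j\in O_z$ from the dense set and extract $n$ with $z^*_n=z_j$, $e_n\in O_{y'}$, and $\delta_n$ small enough that $f^{k_n}(w_n)\in O_z$; the tail of $\gamma_n$ from $e_n$ to $f^{k_n}(w_n)$ lies in $V\subset U_2$ and realises the required orbit segment.

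The main obstacle is precisely in the last step: a single limit $y'$ of the entry points $e_n$ must serve as an accumulation point for every $z_j$ simultaneously, which is not automatic, since a priori the subsequence along which $e_n\to y'$ might fail to contain infinitely many indices with $z^*_n=z_j$ for some $j$. To overcome this I plan to use the freedom in the choice of each $\gamma_n$ (granted by $y\prec_{U_1}z^*_n$) together with a diagonal extraction: listing the pairs $(j,k)$ as $n$, choosing $\gamma_n$ with $z^*_n=z_j$ and $\delta_n,1/n\to 0$, and at each stage refining the subsequence so that the partial sequence of entry points continues to converge in $\overline V$ while still meeting every $z_j$ infinitely often. Compactness of $\overline V$ then guarantees the existence of a common limit $y'$ that is an accumulation point of the entry points across all $j$, and with such $y'$ in hand the verification of $y'\prec_{U_2}K$ described above goes through.
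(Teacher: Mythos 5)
The paper does not give a proof of this lemma; it is quoted as Lemma~6 of Crovisier~\cite{c1}, so there is no argument in the text to set yours against. Judged on its own terms, your construction of the entry points $e_n$ and the verification that any limit $y'$ satisfies $y\prec_{U_1}y'$ and $orb^+(y')\subset U_2$ are sound (the degenerate case $e_n=w_n$ is a side issue you handle adequately). The gap is in the last step, precisely where you flag it, and I do not think the diagonal extraction you propose closes it.

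Here is why. For each $j$, let $A_j\subset\overline V$ denote the set of accumulation points of the sequence $(e_n)$ restricted to the indices $n$ with $z_n^*=z_j$. Each $A_j$ is a nonempty compact subset of $\overline V$, and what you need is a single point $y'\in\bigcap_j A_j$. Compactness of $\overline V$ alone does not deliver that: there is no finite-intersection property in sight, and nothing prevents, say, $A_1$ and $A_2$ from being disjoint — the entry point of a segment aimed at $z_1$ has no reason to be near the entry point of a segment aimed at $z_2$. The diagonal scheme you describe, ``refining the subsequence so that the entry points continue to converge while still meeting every $z_j$ infinitely often,'' already presupposes that such a refinement exists, i.e.\ that the $A_j$'s share a point; as written it is circular. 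Nor does the ``freedom in the choice of each $\gamma_n$'' help: $y\prec_{U_1}z_j^*$ grants the existence of \emph{some} orbit segment into a prescribed small ball around $z_j^*$, but gives you no control over where that segment first enters and stabilizes inside $V$, so you cannot steer $e_n$ toward a previously chosen accumulation point. What is missing is a genuine alignment mechanism — e.g.\ an argument that a segment headed toward $z_1$ can, from the vicinity of a previously obtained entry point and \emph{without leaving $U_2$}, be re-routed to pass near an arbitrary other point of $K$. That requires an additional idea (note that in both applications in this paper the set $K$ is $\alpha(x)$, hence chain transitive; some such structure on $K$, or a different global argument as in~\cite{c1}, seems unavoidable), and the proposal does not supply it.
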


One has the $C^1$ connecting lemma to connect two orbits by perturbation, see~\cite{h,wx}.

\begin{theo}\label{connecting lemma}
Assume $f$ is a diffeomorphism in $\diff^1(M)$. For any neighborhood $\mathcal{U}$ of $f$, there is an integer $N\in\mathbb{N}$, satisfying the following property:\\
For any point $x$ that is not a periodic point of $f$ with period less than or equal to $N$, for any neighborhood $V_x$ of $x$ there is a neighborhood $V'_x\subset V_x$, such that, for any two points $p,q\notin \bigcup_{i=0}^{N-1}f^i(V_x)$, if $p$ has a positive iterate $f^{n_p}(p)\in V'_x$ and $q$ has a negative iterate $f^{-n_q}(q)\in V'_x$, where $n_p,n_q\in \mathbb{N}$, then there is a diffeomorphism $g\in \mathcal{U}$ that coincides with $f$ outside $\bigcup_{i=0}^{N-1}f^i(V_x)$ and $q$ is on the positive orbit of $p$. Moreover, assume $g^m(p)=q$, then $m\leq n_p+N+n_q$ and $\{p,g(p),\cdots,g^m(p)=q\}\subset \bigcup_{i=0}^{n_p}\{f^i(p)\}\cup \bigcup_{i=0}^{N-1}f^i(V_x)\cup \bigcup_{i=0}^{n_q}\{f^{-i}(q)\}$.
\end{theo}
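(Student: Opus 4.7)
The plan is to prove Hayashi's $C^1$ connecting lemma by combining Pugh-type localized perturbations with a combinatorial selecting argument in the spirit of Wen--Xia. The overall strategy is to build a disjoint \emph{tower} of perturbation boxes around $x$, to budget the allowed $C^1$-size across the levels of the tower, and finally to find a shortcut that glues the forward $f$-orbit of $p$ to the backward $f$-orbit of $q$ using a single coherent perturbation supported inside the tower.

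\textbf{Step 1 (Tower of perturbation boxes).} Fix $N\in\mathbb{N}$ large enough that any bump-function perturbation distributed over $N$ disjoint sets can be made to lie in $\mathcal{U}$. Since $x$ is not periodic with period $\leq N$, the points $x,f(x),\dots,f^{N-1}(x)$ are pairwise distinct, so by shrinking $V_x$ I may assume that $V_x,f(V_x),\dots,f^{N-1}(V_x)$ are pairwise disjoint. This tower will be the common support of every perturbation I perform, so every resulting $g$ automatically coincides with $f$ outside $\bigcup_{i=0}^{N-1}f^i(V_x)$.

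\textbf{Step 2 (Perturbation budget).} On each level $f^i(V_x)$ I use a bump-function to realize a small translation of $C^1$-size at most a constant multiple of $\vep/N$, so that composing one such perturbation per level lies inside $\mathcal{U}$. The cumulative effect of these $N$ perturbations is a displacement of size comparable to $\vep$ measured inside $V_x$. I then shrink $V'_x\subset V_x$ so that any two of its points can be linked, within $V_x$, by such an $N$-step composition. This is the classical Pugh-style mechanism: many small, independent translations stacked vertically in the tower produce one macroscopic horizontal motion.

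\textbf{Step 3 (Selecting and connecting).} Given $p,q\notin\bigcup_{i=0}^{N-1}f^i(V_x)$ with $f^{n_p}(p)\in V'_x$ and $f^{-n_q}(q)\in V'_x$, partition $V'_x$ into cells of diameter comparable to $\vep/N$. Across the tower, the forward iterates of $f^{n_p}(p)$ and the backward iterates of $f^{-n_q}(q)$ each produce a finite list of visited cells. A pigeonhole/selecting argument produces a matching pair of cells in $V'_x$ on which the $N$-step perturbation from Step 2 can be applied to bridge the forward orbit of $p$ to the backward orbit of $q$. Concatenating the resulting pieces gives a single $g\in\mathcal{U}$ with $g^m(p)=q$ for some $m\leq n_p+N+n_q$, and the orbit $\{p,g(p),\dots,g^m(p)\}$ is contained in $\bigcup_{i=0}^{n_p}\{f^i(p)\}\cup\bigcup_{i=0}^{N-1}f^i(V_x)\cup\bigcup_{i=0}^{n_q}\{f^{-i}(q)\}$ by construction.

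\textbf{Main obstacle.} Steps 1 and 2 are standard once $N$ and the cell size are fixed. The genuine difficulty is the selecting lemma in Step 3: the orbit segments of $p$ and $q$ may cross the tower with arbitrary and badly correlated itineraries, and one must show that a single coherent perturbation inside the tower still realizes the connection without producing unintended re-entries that would spoil the $C^1$ estimate or the orbit bound. Controlling this requires a careful induction on the number of visits to $V_x$ together with a disjointness argument for the supports of the level-by-level bumps, and this is the part I expect to consume most of the technical work.
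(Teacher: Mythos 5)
The paper does not prove Theorem~\ref{connecting lemma}; it cites it directly from Hayashi~\cite{h} and Wen--Xia~\cite{wx}, so there is no in-paper proof against which to compare your argument. What you have written is a correct \emph{outline} of the Pugh--Hayashi strategy, but it is not a proof, and the gap is exactly where you flag it: Step~3 is the entire content of the theorem, and ``a pigeonhole/selecting argument produces a matching pair of cells'' is not an argument. Let me be concrete about why the difficulty is not resolved by what you wrote.

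The issue is not merely that the forward orbit of $p$ and the backward orbit of $q$ have ``badly correlated itineraries'' through the tower. It is that once you decide to redirect a particular visit of the orbit of $p$ through the tower toward some visit of the orbit of $q$, you may create a piece of orbit that itself re-enters $\bigcup_{i=0}^{N-1}f^i(V_x)$ at places you did not plan for, and the perturbation you committed at those levels of the tower has already been ``spent.'' Pugh's closing lemma handles the case of a single orbit returning near its own initial point, where this recursion can be controlled; the jump from there to connecting two \emph{different} orbits is precisely why the connecting lemma was open for decades after Pugh. Hayashi's contribution (and the simplification of Wen--Xia and Arnaud) is a combinatorial ``selection and shortening'' procedure: one considers the full finite list of entries of $orb^+(p)$ and $orb^-(q)$ into $V_x'$, and one iteratively chooses pairs of entry times to collapse so that the resulting pseudo-orbit becomes strictly shorter at each step, terminating with a genuine orbit that passes through the tower only in the controlled way. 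This requires a careful induction on the number of crossings, together with a precise choice of which crossing to shortcut (typically the innermost or latest pair, depending on the formulation), and it is also responsible for the quantitative bound $m\leq n_p+N+n_q$ and the containment of $\{p,g(p),\dots,g^m(p)\}$ in $\bigcup_{i=0}^{n_p}\{f^i(p)\}\cup\bigcup_{i=0}^{N-1}f^i(V_x)\cup\bigcup_{i=0}^{n_q}\{f^{-i}(q)\}$, which you assert ``by construction'' but which actually requires the shortening argument. Naive pigeonhole gives you two visits landing in the same small cell, but it does not tell you that redirecting there yields a genuine $g$-orbit rather than another pseudo-orbit.

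A secondary but real concern in Step~2: you describe the tower perturbation as ``one small translation per level, composed,'' i.e.\ $N$ independent bumps. The standard mechanism is slightly different: the $C^1$ size you can afford depends on the \emph{relative} geometry of the tower (the boxes $f^i(V_x)$ shrink or stretch under $Df$), so one needs to work with a rescaled metric adapted to the box at each level, and the displacement achievable at level $i$ must be measured in that rescaled sense. Without this, the claim that composing $N$ bumps of $C^1$-size $\vep/N$ produces a macroscopic displacement across $V_x$ while staying in $\mathcal{U}$ is not justified; $Df$ can distort the boxes wildly. So even the ``standard'' part of your sketch is stated too loosely to be checked. In short: the architecture is right, but both the quantitative perturbation lemma in the tower and the selecting/shortening combinatorics are missing, and the latter is the theorem.
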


\subsection{Pliss points and weak sets}

\begin{defi}\label{pliss point}
Assume there is a dominated splitting $T_KM=E\oplus F$ over an invariant compact set $K$ of a diffeomorphism $f\in\diff^1(M)$ and $0<\lambda<1$. A point $x\in K$ is called a \emph{$\lambda$-$E$-Pliss point} (resp. \emph{$\lambda$-$F$-Pliss point}), if $\prod_{i=0}^{n-1} \|Df|_{E(f^{i}(x))}\|\leq {\lambda}^n$ (resp. $\prod_{i=0}^{n-1} \|Df^{-1}|_{F(f^{-i}(x))}\|\leq {\lambda}^n$) holds for any $n\geq 1$. If $x$ is both a $\lambda$-$E$-Pliss point and a $\lambda$-$F$-Pliss point, then it is called a \emph{$\lambda$-bi-Pliss point}. Two $\lambda$-$E$-Pliss points ($f^k(x)$, $f^l(x)$) on an orbit $orb(x)$ are called \emph{consecutive $\lambda$-$E$-Pliss points}, if $k<l$ and $f^i(x)$ is not a $\lambda$-$E$-Pliss point for any $k<i<l$. Similarly we define \emph{consecutive $\lambda$-$F$-Pliss points}.
\end{defi}

\begin{rem}\label{uniform manifold for pliss points}
It is well known that the stable manifold of a $\lambda$-$E$-Pliss point has a uniform scale of dimension $dim(E)$ which depends only on the diffeomorphism $f$.
\end{rem}

\begin{defi}\label{weak set}
Consider a diffeomorphism $f\in\diff^1(M)$ and a constant $0<\lambda<1$. An invariant compact set $K$ with a dominated splitting $T_KM=E\oplus F$ is called a \emph{$\lambda$-$E$-weak set} (resp. $\lambda$-$F$-weak set), if there is no $\lambda$-$E$-Pliss point (resp. $\lambda$-$F$-Pliss point) contained in $K$.
\end{defi}

\begin{rem}\label{F-weak implies E-uniform}
Assume that an invariant compact set $K$ is a $\lambda'$-$F$-weak set where $T_KM=E\oplus F$ is a $(1,\lambda^2)$-dominated splitting with $0<\lambda<\lambda'<1$. By Definitions~\ref{pliss point} and~\ref{weak set}, for any point $y$ contained in $K$, there is an integer $n_y\geq 1$, such that $\prod_{i=0}^{n_y-1}\|Df^{-1}|_{F(f^{-i}(y))}\|> (\lambda')^{n_y}$. Then one has that $\prod_{i=0}^{n_y-1}\|Df|_{E(f^{i}(f^{-n_y}(y)))}\|< \left(\frac{\lambda^2}{\lambda'}\right)^{n_y}<\lambda^{n_y}$. By the compactness of the set $K$, the integers $n_y$ are uniformly bounded. Hence $E|_K$ is uniformly contracted.
\end{rem}

For the properties of bi-Pliss points, we state a lemma here, whose proof will be omitted. The arguments can be seen in papers like~\cite{ps}.

\begin{lem}\label{property of pliss point}
Consider a diffeomorphism $f\in \diff^1(M)$. Assume that there is a $(1,\lambda^2)$-dominated splitting $T_KM=E\oplus F$ over an invariant compact set $K$ for some $0<\lambda<1$. For any number $\lambda'\in(\lambda,1)$, the following properties are satisfied:
\begin{enumerate}
\item Assume there is a sequence of consecutive $\lambda'$-$E$-Pliss points $(f^{k_n}(x_n),f^{l_n}(x_n))_{n\geq 1}$ such that $l_n-k_n\rightarrow +\infty$ as $n\rightarrow\infty$, and $y$ is a limit point of the sequence $(f^{l_n}(x_n))_{n\geq 1}$, then $y$ is a $\lambda'$-bi-Pliss point.

\item For any $x\in K$, if there are $\lambda'$-$E$-Pliss points on $orb^+(x)$ and $\lambda'$-$F$-Pliss points on $orb^-(x)$, then there is at least one $\lambda'$-bi-Pliss point on $orb(x)$.

\item For any $x\in K$, if $x$ is a $\lambda'$-$E$-Pliss point and there is no $\lambda'$-$E$-Pliss point on $orb^-(x)\setminus \{x\}$, then $x$ is a $\lambda'$-bi-Pliss point.
\end{enumerate}
\end{lem}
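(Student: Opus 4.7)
The unifying device for all three statements is the following consequence of the $(1,\lambda^2)$-domination. Since $\|Df^{-1}|_{F(f(z))}\|<\lambda^2/\|Df|_{E(z)}\|$ pointwise,
\[
\prod_{i=0}^{n-1}\|Df^{-1}|_{F(f^{-i}(y))}\|<\lambda^{2n}\cdot\prod_{j=1}^{n}\|Df|_{E(f^{-j}(y))}\|^{-1}.
\]
Hence, because $\lambda^2/\lambda'<\lambda'$, proving that a point $y$ is $\lambda'$-$F$-Pliss reduces to the backward lower bound $\prod_{j=1}^{n}\|Df|_{E(f^{-j}(y))}\|\geq(\lambda^2/\lambda')^n$ for every $n\geq 1$. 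All three parts reduce to variants of this one estimate.

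My plan is to start with (3) as the prototype. Arguing by contradiction, take the minimal $n\geq 1$ violating the bound at $x$ and set $y:=f^{-n}(x)$. I would then verify that $y$ is a $\lambda'$-$E$-Pliss point by checking $\prod_{i=0}^{m-1}\|Df|_{E(f^i(y))}\|\leq(\lambda')^m$ in three cases: $m=n$ is immediate from the violation combined with $\lambda^2/\lambda'<\lambda'$; $m<n$ follows by writing the product as the ratio $\prod_{j=1}^{n}/\prod_{j=1}^{n-m}$ and using minimality to bound the denominator from below; $m>n$ follows by splitting at $i=n$ and invoking the $\lambda'$-$E$-Pliss hypothesis on $x$. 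Since $y\in orb^-(x)\setminus\{x\}$, this contradicts the hypothesis of (3) and establishes the backward lower bound at $x$, whence $x$ is $\lambda'$-$F$-Pliss.

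For (1), exactly the same minimality argument goes through but can only force a contradiction in the range $1\leq n\leq l_n-k_n-1$, since only there does $\lambda'$-$E$-Pliss-ness of $f^{l_n-n}(x_n)$ contradict consecutivity with $f^{l_n}(x_n)$. So the bound $\prod_{j=1}^{n}\|Df|_{E(f^{l_n-j}(x_n))}\|\geq(\lambda^2/\lambda')^n$ holds in that range. Since $l_n-k_n\to\infty$, for every fixed $N$ the bound eventually holds with $n=N$; passing to a subsequence with $f^{l_n}(x_n)\to y$ and using continuity of the bundle $E$ along $K$, we obtain the bound at $y$ for all $N\geq 1$. The domination estimate then gives $y$ is $\lambda'$-$F$-Pliss, while closedness of the $\lambda'$-$E$-Pliss property along limits gives $y$ is also $\lambda'$-$E$-Pliss, hence $\lambda'$-bi-Pliss.

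For (2), let $k_F\leq 0$ be the largest integer with $f^{k_F}(x)$ being $\lambda'$-$F$-Pliss, and let $k_E\geq k_F$ be the smallest integer with $f^{k_E}(x)$ being $\lambda'$-$E$-Pliss; both exist under the hypothesis. If $k_E=k_F$ the point is already $\lambda'$-bi-Pliss. Otherwise, no $f^k(x)$ with $k_F\leq k<k_E$ is $\lambda'$-$E$-Pliss, so the minimality argument of (3) applied at $y=f^{k_E}(x)$ and restricted to $1\leq n\leq k_E-k_F$ yields the backward lower bound in that range, and hence verifies the $F$-Pliss inequality at $f^{k_E}(x)$ up to $n=k_E-k_F$. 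For $n>k_E-k_F$ I would split the $F$-product at the index $k_F$ and use the known $\lambda'$-$F$-Pliss-ness of $f^{k_F}(x)$ to bound the remaining tail, giving $\lambda'$-$F$-Pliss-ness at $f^{k_E}(x)$ and thus bi-Pliss-ness. I expect the only real subtlety to be the bookkeeping in the $m<n$ case of the minimality argument for (3); once the telescoping ratio of partial products is handled correctly, (1) and (2) are just extensions of the same mechanism.
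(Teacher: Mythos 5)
The paper omits the proof of this lemma entirely, referring the reader to~\cite{ps} for the arguments, so there is no in-paper proof to compare against. Your proposal is correct and self-contained: the reduction of $\lambda'$-$F$-Pliss-ness at $y$ to the backward lower bound $\prod_{j=1}^{n}\|Df|_{E(f^{-j}(y))}\|\geq(\lambda^2/\lambda')^n$ via the $(1,\lambda^2)$-domination and $\lambda^2/\lambda'<\lambda'$ is exactly the right mechanism, and the minimal-violation argument (which converts the absence of intermediate $\lambda'$-$E$-Pliss points into that backward lower bound) is the standard device one finds in~\cite{ps} and its descendants. The three-case check ($m=n$, $m<n$ via the telescoping ratio and minimality, $m>n$ via splitting at the Pliss point) is sound, and the closedness of the $\lambda'$-$E$-Pliss set on $K$ (used to conclude $y$ is $E$-Pliss in item (1)) follows from the continuity of $E$ over $K$. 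One small point worth making explicit in item (2): in the case where the minimal violating $n$ equals $k_E-k_F$, the argument would make $f^{k_F}(x)$ a $\lambda'$-$E$-Pliss point, which contradicts $k_E>k_F$ (by minimality of $k_E$) rather than being a separate terminating subcase; with that noted the proof closes in every branch.
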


To obtain Pliss points, one can use the following lemma given by V. Pliss, see~\cite{p},~\cite{ps}.

\begin{lem}[Pliss Lemma]\label{pliss lemma}
Consider a diffeomorphism $f\in\diff^1(M)$ and two numbers $0<\lambda_1<\lambda_2<1$. Assume $K$ is an invariant compact set and $E\subset T_KM$ is $Df$-invariant. Then the following properties are satisfied:
\begin{enumerate}
\item There are an integer $N$ and a number $c$ that depend only on $f$, $\lambda_1$ and $\lambda_2$, such that, for any point $x\in K$, and any number $n\geq N$, if
   \begin{displaymath}
     \prod_{i=0}^{n-1}\|Df|_{E(f^ix)}\|\leq {\lambda_1}^n,
   \end{displaymath}
then, there are $r$ integers $0\leq n_1<\cdots<n_r\leq n$, where $r>cn$, such that, for any $j=1,\cdots,r$ and any $k=n_j+1,\cdots,n$, we have:
   \begin{displaymath}
     \prod_{i=n_j}^{k-1}\|Df|_{E(f^ix)}\|\leq {\lambda_2}^{k-n_j}.
   \end{displaymath}

\item For any $x\in K$, any integer $l$, if
   \begin{displaymath}
     \prod_{i=0}^{n-1}\|Df|_{E(f^ix)}\|\leq {\lambda_1}^n,
   \end{displaymath}
holds for any $n\geq l$, then there are a sequence of integers $0\leq n_1<n_2\cdots<n_r<\cdots$, such that for any $j\geq 1$ and any $k\geq n_j+1$, we have:
   \begin{displaymath}
     \prod_{i=n_j}^{k-1}\|Df|_{E(f^ix)}\|\leq {\lambda_2}^{k-n_j}.
   \end{displaymath}
\end{enumerate}
\end{lem}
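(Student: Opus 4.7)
The plan is to prove both parts of the Pliss Lemma by a combinatorial argument after passing to logarithms. Set $a_i = \log \|Df|_{E(f^i x)}\|$ and $\theta_j = \log \lambda_j$, so that the hypothesis of part 1 reads $\sum_{i=0}^{n-1} a_i \leq n \theta_1$. Note $a_i$ is uniformly bounded: $a_i \leq C_0 := \log \|Df\|_\infty$ and $a_i \geq -C_1 := -\log \|Df^{-1}\|_\infty$, both finite by compactness of $M$. Call an index $j \in \{0,\dots,n-1\}$ \emph{Pliss} if $\sum_{i=j}^{k-1} a_i \leq (k-j)\theta_2$ for every $k \in (j, n]$, and let $G$, $B$ denote the Pliss and non-Pliss sets. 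The object of part 1 is to show $|G| > cn$ for some $c = c(f, \lambda_1, \lambda_2) > 0$.

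The main combinatorial step is to estimate the $a$-sum on each maximal interval $[u,v] \subset B$ of consecutive non-Pliss indices. Starting at $u$, let $k_1 \in (u, n]$ be the smallest integer with $\sum_{i=u}^{k_1-1} a_i > (k_1-u)\theta_2$. I claim $k_1 - 1$ is itself non-Pliss: otherwise the Pliss condition at $k_1 - 1$ gives $a_{k_1-1} \leq \theta_2$, which combined with $\sum_{i=u}^{k_1-2} a_i \leq (k_1-1-u)\theta_2$ (from the minimality of $k_1$) contradicts the defining inequality at $k_1$. Hence $k_1 \leq v+1$; if $k_1 \leq v$, iterate from $k_1 \in B$ to obtain $k_2 \in (k_1, v+1]$, and so on. The greedy sequence $u = j_0 < j_1 < \cdots$ must terminate at some $j_\ell = v+1$, and summing the resulting inequalities along this cover gives $\sum_{i=u}^{v} a_i > (v-u+1)\theta_2$.

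Combining this excursion lower bound on $B$ with the crude lower bound $\sum_{j \in G} a_j \geq -|G| C_1$, substituting $|B| = n - |G|$, and comparing with $\sum a_i \leq n \theta_1$, yields $|G|(C_1 + \theta_2) > n(\theta_2 - \theta_1)$. The case $C_1 + \theta_2 \leq 0$ is vacuous because then $a_i \geq -C_1 \geq \theta_2$ pointwise, forcing $\sum a_i \geq n\theta_2 > n\theta_1$ and contradicting the hypothesis; otherwise $c := (\theta_2-\theta_1)/(C_1+\theta_2) > 0$ and $N$ is chosen so that $cn \geq 1$ for $n \geq N$.

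For part 2, apply part 1 for each $n \geq \max(l, N)$ to get a Pliss set $G_n \subset [0, n-1]$ with $|G_n| > cn$. A Pliss index for window $[0, n+1]$ is automatically Pliss for window $[0, n]$, so $G_{n+1} \cap [0, n-1] \subseteq G_n$; thus for each fixed $m$, the decreasing sequence $G_n \cap [0, m]$ stabilizes to the set of indices in $[0,m]$ that are Pliss for every window, i.e.\ the \emph{full} Pliss times. A diagonal extraction along $n \to \infty$, together with the density bound $|G_n| > cn \to \infty$, produces the desired infinite increasing sequence of full Pliss indices. The main obstacle throughout is the excursion argument in part 1, ensuring the iterated witnesses $j_\ell$ stay inside $B$ and exhaust the whole interval; everything else reduces to bookkeeping with bounded sequences.
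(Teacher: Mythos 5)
The paper itself does not prove this lemma: it simply cites Pliss~\cite{p} and Pujals--Sambarino~\cite{ps}, so your proof stands on its own and must be judged on its merits.

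Your argument for part $1$ is essentially the standard one and is sound. Passing to $a_i=\log\|Df|_{E(f^ix)}\|$, bounding $a_i\geq -C_1$, and estimating the excursion sum over each maximal non-Pliss block $[u,v]$ via a greedy sequence $u=j_0<j_1<\cdots<j_L=v+1$ is exactly the right mechanism, and the density $c=(\theta_2-\theta_1)/(C_1+\theta_2)$ you extract is the expected one. One place where the write-up is looser than it should be: you argue only that $k_1-1$ is non-Pliss and then assert ``hence $k_1\leq v+1$''. To justify that the greedy step does not overshoot the block $[u,v]$ you really need that \emph{every} $m\in(u,k_1)$ is non-Pliss (so that $[u,k_1-1]$ is a contiguous run of non-Pliss indices, hence contained in the maximal run $[u,v]$). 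This stronger statement follows by the same two-line argument you already give for $k_1-1$: for $m\in(u,k_1)$, minimality of $k_1$ gives $\sum_{i=u}^{m-1}a_i\leq(m-u)\theta_2$, and if $m$ were Pliss then $\sum_{i=m}^{k_1-1}a_i\leq(k_1-m)\theta_2$, contradicting the defining inequality at $k_1$. Say this explicitly.

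Part $2$, however, has a genuine gap. You deduce from part $1$ a density bound $|G_n|>cn$ together with the nesting $G_{n+1}\cap[0,n-1]\subseteq G_n$, and then claim that a ``diagonal extraction'' produces infinitely many full Pliss times. But these two facts alone do not force the limit sets to be non-empty. Take abstractly $G_n=[\,\lceil n/2\rceil,\,n-1\,]$: it has density $1/2>c$ and satisfies the nesting, yet $\bigcap_n\bigl(G_n\cap[0,m]\bigr)=\emptyset$ for every fixed $m$, i.e.\ no full Pliss times at all. The density from part $1$ is simply not the relevant ingredient here; Pliss indices for window $n$ can drift to the right as $n$ grows. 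What actually carries part $2$ is the hypothesis $\sum_{i=0}^{n-1}a_i\leq n\theta_1$ for all $n\geq l$, via the elementary observation that $S_n:=\sum_{i=0}^{n-1}a_i-n\theta_2\leq n(\theta_1-\theta_2)\to-\infty$. Since $S_n\to-\infty$, for every $m_0$ the supremum $\sup_{n\geq m_0}S_n$ is attained at some $m_1\geq m_0$ (take the largest maximizer), and this $m_1$ satisfies $S_{m_1}\geq S_k$ for all $k>m_1$, i.e.\ $m_1$ is a full Pliss time. Iterating from $m_0=m_1+1$ gives the infinite sequence. (Equivalently: if the full Pliss times were finite, every $m$ beyond them would have a later $k$ with $S_k>S_m$, producing a strictly increasing sequence of $S$-values along a subsequence going to infinity, contradicting $S_n\to-\infty$.) Replace the diagonal-extraction paragraph with this argument, which bypasses part $1$ entirely.
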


We have the following corollary of the Pliss lemma. The proof can be found in~\cite{w}.

\begin{cor}\label{corollary of pliss}
Assume there is a dominated splitting $T_KM=E\oplus F$ on an invariant compact set $K$ of a diffeomorphism $f$. Then for any $x\in K$, and any number $0<\lambda<1$, the following two properties are satisfied.
\begin{enumerate}
\item If $x$ is a $\lambda$-$E$-Pliss point, then $\omega(x)$ contains some $\lambda$-$E$-Pliss points.

\item If for any point $y\in\omega(x)$, there is an integer $n_y\geq 1$, such that
   \begin{displaymath}
     \prod_{i=0}^{n_y-1}\|Df|_{E(f^iy)}\|\leq {\lambda}^{n_y},
   \end{displaymath}
then for any $\lambda'\in(\lambda,1)$, there are infinitely many $\lambda'$-$E$-Pliss point on $orb^+(x)$.
\end{enumerate}
\end{cor}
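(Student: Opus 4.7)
Both parts are direct consequences of the Pliss Lemma combined with a compactness/continuity argument, so the plan is to reduce each to a clean application of Lemma~\ref{pliss lemma}(2) together with the continuity of $\|Df|_E\|$ (which is available since a dominated splitting is automatically continuous).

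For part (1), the plan is as follows. Since $x$ is a $\lambda$-$E$-Pliss point, the product $\prod_{i=0}^{n-1}\|Df|_{E(f^ix)}\|\leq\lambda^n$ holds for \emph{all} $n\geq 1$, so for any $\mu\in(\lambda,1)$ the hypothesis of Lemma~\ref{pliss lemma}(2) is satisfied with $\lambda_1=\lambda$, $\lambda_2=\mu$ and $l=1$. This yields an infinite sequence of integers $n_1<n_2<\cdots$ such that each $f^{n_k}(x)$ is a $\mu$-$E$-Pliss point. By compactness any accumulation point of $(f^{n_k}(x))$ lies in $\omega(x)$, and by continuity of $\|Df|_E\|$ along finite orbit segments the Pliss inequality passes to the limit, giving a $\mu$-$E$-Pliss point in $\omega(x)$ for every $\mu>\lambda$. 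Finally, letting $\mu_j\downarrow\lambda$ and choosing a corresponding $\mu_j$-$E$-Pliss point $y_j\in\omega(x)$, compactness of $\omega(x)$ produces a limit point $y$; for every fixed $n\geq 1$ the bound $\prod_{i=0}^{n-1}\|Df|_{E(f^iy_j)}\|\leq \mu_j^n$ passes to the limit to give $\leq\lambda^n$, so $y$ is the desired $\lambda$-$E$-Pliss point in $\omega(x)$.

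For part (2), fix $\lambda'\in(\lambda,1)$ and choose intermediate constants $\lambda<\mu<\mu'<\lambda'$. At each $y\in\omega(x)$ the product $\prod_{i=0}^{n_y-1}\|Df|_{E(f^iy)}\|\leq\lambda^{n_y}$ is strictly below $\mu^{n_y}$, so by continuity there is a neighborhood $V_y$ on which the same product stays $\leq\mu^{n_y}$. Extract a finite subcover $V_{y_1},\dots,V_{y_r}$ of $\omega(x)$, and set $N=\max_j n_{y_j}$ and $M=\max_{z\in K}\|Df|_{E(z)}\|$. Because $f^n(x)\to\omega(x)$, there exists $n_0$ such that $f^n(x)\in\bigcup_j V_{y_j}$ for every $n\geq n_0$. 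Now iterate: starting from $n_0$, whenever we are at time $t_k\geq n_0$ we fall into some $V_{y_j}$ and may jump to $t_{k+1}=t_k+n_{y_j}\leq t_k+N$ paying a factor $\leq\mu^{t_{k+1}-t_k}$ on the product. Accumulating these steps and absorbing at most $N$ ``uncontrolled'' tail iterates (each bounded by $M$) gives a constant $C=(M/\mu)^N$ such that
\[
\prod_{i=0}^{m-1}\|Df|_{E(f^{n_0+i}x)}\|\leq C\mu^m\qquad\text{for every }m\geq 1.
\]
Since $\mu<\mu'$, for $m$ large enough the right-hand side is $\leq(\mu')^m$, so applying Lemma~\ref{pliss lemma}(2) at the point $f^{n_0}(x)$ with $\lambda_1=\mu'$ and $\lambda_2=\lambda'$ produces infinitely many $\lambda'$-$E$-Pliss points on $\mathrm{orb}^+(f^{n_0}(x))\subset\mathrm{orb}^+(x)$, as required.

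The only slightly delicate step is the bookkeeping in part (2): one has to choose the chain of intermediate constants $\lambda<\mu<\mu'<\lambda'$ carefully so that both the uniformization of the $n_y$'s over the finite subcover (which loses a bit of contraction, $\lambda\leadsto\mu$) and the absorption of the bounded ``tail'' factor $C$ (which loses another bit, $\mu\leadsto\mu'$) together still leave room below $\lambda'$ when the Pliss Lemma is invoked. Once the constants are set up, the rest is routine.
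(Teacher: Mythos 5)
Your proof is correct. Note that the paper itself does not prove Corollary~\ref{corollary of pliss} but refers to~\cite{w}; nevertheless your argument is the standard one and there is nothing to criticize in substance. Part~(1) correctly combines Pliss Lemma~\ref{pliss lemma}(2) with a double compactness limit: first produce $\mu$-$E$-Pliss points along $\mathrm{orb}^+(x)$ for each $\mu\in(\lambda,1)$, push them into $\omega(x)$ by continuity of the finite products, then let $\mu\downarrow\lambda$ and extract a further limit in $\omega(x)$; each step passes a closed inequality to the limit, so the final point is a genuine $\lambda$-$E$-Pliss point. (One could also argue more directly by looking at the sequence $a_n=\sum_{i<n}\log\|Df|_{E(f^ix)}\|-n\log\lambda\le 0$ and passing to a subsequence realizing $\limsup a_n$, which avoids invoking the Pliss Lemma for this half, but your route is equally valid.) Part~(2) is the uniformization-over-$\omega(x)$ argument: the finite subcover gives a uniform $N$, the eventual entry of the orbit into the cover gives $n_0$, and the telescoping of the product from time $n_0$ yields $\prod_{i=0}^{m-1}\|Df|_{E(f^{n_0+i}x)}\|\le C\mu^m$ for all $m$, after which the Pliss Lemma with the intermediate constants $\mu<\mu'<\lambda'$ finishes. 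The only cosmetic remark: you should take $C=\max\bigl((M/\mu)^N,1\bigr)$ to cover the degenerate case $M<\mu$ (in which $E$ is already uniformly contracted and the statement is trivial), but this does not affect correctness.
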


We have the selecting lemma of Liao to obtain weak periodic orbits, see~\cite{l,w-selecting}.

\begin{lem}[Liao's selecting lemma]\label{selecting}
Consider an invariant compact set $\Lambda$ of a diffeomorphism $f\in\diff^1(M)$ with a non-trivial $(1,\lambda^2)$-dominated splitting $T_\Lambda M=E\oplus F$. Assume that the following two conditions are satisfied:
\begin{itemize}
\item There is a point $b\in \Lambda$, such that, for all $n\geq 1$, we have:

   \begin{displaymath}
     \prod_{i=0}^{n-1} \|Df|_{E(f^{i}(b))}\|\geq 1.
   \end{displaymath}

\item There is a constant $\lambda_0\in(\lambda,1)$, such that there is no $\lambda_0$-$E$-weak set contained in $\Lambda$.
\end{itemize}
Then for any two numbers $\lambda_1,\lambda_2\in(\lambda_0,1)$ with $\lambda_1<\lambda_2$, there is a sequence of periodic orbits $orb(q_n)$ that are homoclinically related with each other and converges to a subset of $\Lambda$, such that, denoting by $\tau(q_n)$ the period of $p_n$, then for any $n=0,1,2,\cdots,\tau(q_n)$,

   \begin{displaymath}
     \prod_{i=0}^{n-1} \|Df|_{E(f^{i}(q_n))}\|\leq {\lambda_2}^n,
   \end{displaymath}
and
   \begin{displaymath}
     \prod_{i=n}^{\tau(q_n)-1} \|Df|_{E(f^{i}(q_n))}\|\geq {\lambda_1}^{\tau(q_n)-n}.
   \end{displaymath}
Similar assertions for $F$ hold with respect to $f^{-1}$.
\end{lem}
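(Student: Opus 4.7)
The plan is to produce the periodic orbits $\textrm{orb}(q_n)$ of $f$ itself (without perturbation) by first extracting inside $\Lambda$ long ``quasi-hyperbolic strings'', i.e.\ orbit segments on which the cocycle $Df|_E$ already satisfies, in finite time, the two Pliss estimates targeted in the statement, and then closing each such string into a genuine periodic orbit by Liao's shadowing argument under the dominated splitting. The assertion for $F$ under $f^{-1}$ is obtained by symmetry, exchanging the roles of $(f,E)$ and $(f^{-1},F)$.

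The first ingredient is the ``no $\lambda_0$-$E$-weak set'' hypothesis: every $y\in\Lambda$ admits $n_y\ge 1$ with $\prod_{i=0}^{n_y-1}\|Df|_{E(f^iy)}\|\le\lambda_0^{n_y}$, so by part~2 of Corollary~\ref{corollary of pliss} every forward orbit inside $\Lambda$ carries infinitely many $\lambda'$-$E$-Pliss points for every $\lambda'\in(\lambda_0,1)$. Fixing an auxiliary $\lambda_3\in(\lambda_1,\lambda_2)$, I exploit the contrast with hypothesis~(i): along $b$ the forward product of $\|Df|_E\|$ stays $\ge 1$ forever, so there exist arbitrarily long forward orbit pieces in $\Lambda$ which begin close to $b$ (non-contracting initial segment) and end at a $\lambda_1$-$E$-Pliss point. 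Applying part~1 of the Pliss lemma in reverse time from such an endpoint with thresholds $\lambda_1<\lambda_3$, and adjusting the starting point, one extracts inside $\Lambda$ orbit segments $(q,f(q),\ldots,f^k(q))$ of arbitrarily large length $k$ satisfying
\[
\prod_{i=0}^{n-1}\|Df|_{E(f^iq)}\|\le\lambda_2^{\,n}\quad(1\le n\le k),\qquad \prod_{i=n}^{k-1}\|Df|_{E(f^iq)}\|\ge\lambda_1^{\,k-n}\quad(0\le n\le k).
\]

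The first estimate together with Remark~\ref{uniform manifold for pliss points} supplies a local stable disk $W^s_\delta(q)$ tangent to $E$ of a uniform size $\delta>0$. The second estimate, reinterpreted via the $(1,\lambda^2)$-domination, makes $f^k(q)$ a $(\lambda^2/\lambda_1)$-$F$-Pliss point in finite time for $f^{-1}$ along the string, which produces a local unstable disk $W^u_\delta(f^k(q))$ tangent to $F$ of the same uniform size $\delta$. Taking $k$ large enough that $f^k(q)$ lies $\delta/2$-close to $q$ inside the compact set $\Lambda$, the disks $W^s_\delta(q)$ and $W^u_\delta(f^k(q))$ intersect transversally thanks to the angle bound supplied by the dominated splitting; their intersection point lies on a genuine $k$-periodic hyperbolic orbit $\textrm{orb}(q_n)$ of $f$ shadowing the string, of stable index $\dim E$, and inheriting the two displayed inequalities with a harmless loss of thresholds. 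As the orbits $\textrm{orb}(q_n)$ accumulate on a common compact part of $\Lambda$ and carry stable and unstable disks of the same uniform size $\delta$, pairwise transverse crossings of these disks yield the asserted homoclinic relation.

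The main obstacle is the closing step: one must pass from a merely \emph{quasi}-hyperbolic string in $\Lambda$ to a truly periodic orbit of $f$ while preserving the explicit Pliss-type inequalities of the conclusion. This is the technical heart of Liao's shadowing argument for a dominated splitting, and the introduction of the intermediate threshold $\lambda_3\in(\lambda_1,\lambda_2)$ in the string extraction is precisely what lets one absorb the small degradation of estimates during the shadowing within the target thresholds $\lambda_1,\lambda_2$.
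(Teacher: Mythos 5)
The paper does not prove this lemma: it states it as a black box and cites Liao~\cite{l} and Wen~\cite{w-selecting}, so there is no in-paper proof to compare against. Judged on its own, your sketch follows the well-known Liao strategy (extract quasi-hyperbolic strings in $\Lambda$, then close them), but two of the steps you describe do not actually work as stated.

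First, the closing step. You locate a point $z\in W^s_\delta(q)\cap W^u_\delta(f^k(q))$ and claim that ``their intersection point lies on a genuine $k$-periodic hyperbolic orbit.'' It does not. By construction $f^n(z)$ shadows $f^n(q)$ for $n\geq 0$ and $f^{-n}(z)$ shadows $f^{k-n}(q)$ for $n\geq 0$; nothing forces $f^k(z)=z$, and generically it will not hold. An intersection of a stable disk at one end of the string with an unstable disk at the other end produces a heteroclinic-type point whose orbit merely accompanies the (non-periodic) orbit of $q$ in both time directions. The mechanism that actually closes a quasi-hyperbolic string is Liao's shadowing lemma: one works with the finite orbit segment as a \emph{periodic pseudo-orbit} (jumping from $f^k(q)$ back to $q$) and runs a graph-transform or cone-field fixed-point argument for the return map $f^k$ on a small box around $q$; the fixed point of that contraction is the periodic point. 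You name ``Liao's shadowing argument'' at the end, but the construction you write down is a different (and insufficient) one, so as it stands the step is a gap.

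Second, the string extraction and the requirement that the string nearly closes up ($f^k(q)$ is $\delta/2$-close to $q$) are taken for granted. The hypotheses give you, separately, a point $b$ along which $\prod\|Df|_E\|$ never drops below $1$, and abundant $\lambda'$-$E$-Pliss points on every forward orbit via Corollary~\ref{corollary of pliss}. But producing a single segment $(q,\ldots,f^k(q))$ that simultaneously satisfies the forward Pliss estimate at the beginning, the reverse anti-Pliss estimate at the end, \emph{and} returns $\delta/2$-close to its starting point, is exactly the hard part of the lemma (this is the ``selecting'' in the name). Invoking ``part~1 of the Pliss lemma in reverse time'' and ``adjusting the starting point'' does not show how all three constraints are met at once; a pigeonhole/recurrence argument to force near-closure will in general destroy the Pliss estimates at the new endpoints, and restoring them is what requires the intricate selection process in~\cite{l,w-selecting}. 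Until those two steps are filled in with the actual fixed-point shadowing and the actual selection argument, the proof is incomplete.
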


\subsection{Generic properties}

We give some known $C^1$-generic properties of diffeomorphisms in the following theorem. These results can be found in books or papers like~\cite{bdv,bc,c1} etc.

\begin{theo}\label{generic properties}
There is a residual set $\mathcal{R}_0$ in $\diff^1(M)$, such that any diffeomorphism $f\in\mathcal{R}_0$ satisfies the following properties:
\begin{enumerate}
\item The diffeomorphism $f$ is Kupka-Smale: all periodic points of $f$ are hyperbolic and the stable and unstable manifolds of periodic orbits intersect transversely.

\item The periodic points are dense in the chain recurrent set and any chain recurrence class containing a periodic point $p$ is the homoclinic class of $p$.

\item For any two points $x,y\in M$ and a compact set $K$, $x\dashv_K y$ if and only if $x\prec_K y$.

\item Any chain transitive compact invariant set can be accumulated by a sequence of hyperbolic periodic orbits in the Hausdorff distance.
\end{enumerate}
\end{theo}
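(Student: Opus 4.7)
The plan is to realise the residual set $\mathcal{R}_0$ as a countable intersection of $C^1$-open-dense (or itself residual) subsets of $\diff^1(M)$, one family for each of the four items, and then apply the Baire category theorem.

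For property (1) this is the classical Kupka--Smale theorem: for each integer $n\geq 1$, the set $\mathcal{KS}_n$ of diffeomorphisms whose periodic points of period at most $n$ are all hyperbolic and whose finitely many such periodic orbits have pairwise transversely intersecting invariant manifolds is $C^1$-open and dense. Openness is immediate from persistence of hyperbolicity and transversality; density is obtained by applying Franks' lemma along each periodic orbit to realise arbitrary small linear perturbations that hyperbolise the derivative, followed by a Thom-type transversality argument that puts the (finitely many) invariant manifolds into general position. The intersection $\bigcap_n \mathcal{KS}_n$ is residual and yields (1).

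Items (2)--(4) all rest on the Bonatti--Crovisier connecting lemma for pseudo-orbits~\cite{bc}, which asserts that any pseudo-orbit of $f$ contained in a neighborhood $W$ of an invariant compact chain recurrent piece can, after a $C^1$-small perturbation supported in $W$, be turned into a true orbit segment. I would then organise the consequences through semicontinuity together with a Baire continuity-point argument. The maps $f\mapsto R(f)$ and (for a hyperbolic continuation $p=p_f$) $f\mapsto H(p_f,f)$, $f\mapsto \overline{\mathrm{Per}(f)}$, and the graphs of the relations $\dashv_K$ and $\prec_K$ are semicontinuous in $f$ (upper or lower) as set-valued maps into the space of compact subsets with Hausdorff topology; on the residual set of their common continuity points, the connecting lemma forces the corresponding sets/relations to coincide. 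Concretely: for (2), the lemma applied inside tiny neighborhoods of any chain recurrent point produces periodic orbits there (density), and applied inside a chain recurrence class containing a periodic point $p$ connects any chain-related periodic $q$ to $p$ through $W^u(p)$ and $W^s(p)$, placing $q$ in $H(p,f)$ after a Kupka--Smale transversality adjustment; for (3), $x\prec_K y \Rightarrow x\dashv_K y$ is immediate and the converse is precisely the localised form of the lemma, which becomes an identity on the residual continuity set; for (4), density of periodic points and the connecting lemma allow one to close up a periodic $\varepsilon$-pseudo-orbit $\varepsilon$-dense in a chain transitive compact set $\Lambda$ into a genuine hyperbolic periodic orbit $\varepsilon$-Hausdorff close to $\Lambda$, and letting $\varepsilon\to 0$ produces the required sequence.

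The main obstacle is the connecting lemma for pseudo-orbits itself, whose proof requires a delicate tower-of-perturbation construction along the pseudo-orbit, with care taken that all perturbations are supported inside a prescribed neighborhood of the chain recurrent piece and have prescribed $C^1$-size. I would treat this as a black box and cite~\cite{bc,c1,bdv}; given it, the remaining work is the semicontinuity/Baire bookkeeping sketched above, which introduces no genuinely new idea.
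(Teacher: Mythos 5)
Your proposal is fine: the paper does not prove this theorem at all, but simply quotes it as a collection of known $C^1$-generic facts with references to exactly the sources you invoke (Kupka--Smale, Bonatti--Crovisier \cite{bc}, Crovisier \cite{c1}, \cite{bdv}). Your decomposition --- the classical Kupka--Smale theorem for item (1), and the pseudo-orbit connecting lemma of \cite{bc} combined with semicontinuity and a Baire argument over a countable basis for items (2)--(4) --- is the standard route by which these results are established in those references, so there is nothing to flag beyond the fact that the hard work is indeed in the cited black boxes.
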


\section{No hyperbolic bundle implies existence of periodic points: proof of Theorem A}

In this section, we give the proof of Theorem A.

First, we take $f\in\mathcal{R}_0$ satisfying the assumptions of Theorem A, where $\mathcal{R}_0$ is the residual subset in $\diff^1(M)$ that satisfies the conclusions of Theorem~\ref{generic properties}. Also, we assume that $f$ satisfies the properties stated in Proposition~\ref{asymptotic connecting}. Later we will assume also that $f$ belongs to another residual subset $\mathcal{R}_1$ of $\diff^1(M)$, which will be defined below.

By taking an adapted metric~\cite{g}, we assume that $T_{\Lambda}M=E\oplus F$ is a $(1,\lambda^2)$-dominated splitting for a constant $0<\lambda<1$. To simplify the notations, we call $E\oplus F$ a $\lambda^2$-dominated splitting. It is known that a dominated splitting of an invariant compact set of $f$ can be extended to the maximal invariant compact set of a small neighborhood for any diffeomorphism $g$ that is $C^1$-close to $f$. Then we can take a neighborhood $V_0$ of $\Lambda$ and a $C^1$-neighborhood $\mathcal{U}_0$ of $f$, such that, for any $g\in\mathcal{U}_0$, the invariant compact set $\bigcap_{n\in\mathbb{Z}}g^n(\overline{V_0})$ has a $\lambda^2$-dominated splitting that is the extension of $T_{\Lambda}M=E\oplus F$. To simplify the notations, we still denote this dominated splitting by $E\oplus F$.

We assume by absurd that neither $E|_{\Lambda}$ is contracted nor $F|_{\Lambda}$ is expanded. Take two numbers $\lambda_1<\lambda_2$ in the interval $(\lambda,1)$. In order to find a contradiction, one has to obtain a periodic orbit which intersects the aperiodic class $\Lambda$.

\subsection{Existence of a bi-Pliss point whose $\omega$-limit set is $E$ contracted}

Since $F$ is not expanded, there is a point $b\in\Lambda$, such that $\prod_{i=0}^{n-1} \|Df^{-1}|_{F(f^{-i}(b))}\|\geq 1$ for any $n\geq 1$. By Lemma~\ref{selecting}, if there is no $\lambda_1$-$F$-weak set contained in $\Lambda$, then $\Lambda$ intersects a homoclinic class, and hence $\Lambda$ is contained in this homoclinic class by Theorem~\ref{generic properties}, which is a contradiction. Thus $\Lambda$ contains $\lambda_1$-$F$-weak sets. Moreover, by Remark~\ref{F-weak implies E-uniform}, the whole aperiodic class $\Lambda$ is not a $\lambda_1$-$F$-weak set, since we have assumed that $E|_{\Lambda}$ is not contracted. Following the same arguments of Section 4.1 of~\cite{w}, we can get the following lemma.

\begin{lem}\label{pliss point x and omega x}
There is a $\lambda_2$-bi-Pliss point $x\in\Lambda$, such that $\omega(x)$ is a $\lambda_1$-$F$-weak set and $x\notin\omega(x)$.
\end{lem}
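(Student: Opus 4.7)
The plan is to realize the desired bi-Pliss point as an accumulation point along an orbit of $\Lambda$ asymptotic to a proper $\lambda_1$-$F$-weak subset. The paragraph preceding the lemma shows that $\Lambda$ contains a $\lambda_1$-$F$-weak invariant compact subset $A$ while $\Lambda$ itself is not $\lambda_1$-$F$-weak, so I would fix such $A\subsetneq\Lambda$. Remark~\ref{F-weak implies E-uniform} then gives a uniform contraction of $E|_A$ by some constant $\mu_0<\lambda_1$, and continuity together with the $(1,\lambda^2)$-domination extends this $E$-contraction estimate to orbit segments remaining inside a small enough compact neighborhood $U$ of $A$, with a constant $\mu_0'$ still below $\lambda_2$. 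This neighborhood $U$ is the trapping region that will carry the construction.

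Next I would produce a genuine orbit $y\in\Lambda\setminus A$ whose forward iterates enter $U$ and, by the trapping estimate, stay there and accumulate on $A$, so that $\omega(y)\subset A$. Chain transitivity of $\Lambda$ gives $u\dashv_\Lambda A$ for any $u\in\Lambda\setminus A$, which is equivalent to $u\prec_\Lambda A$ by Theorem~\ref{generic properties}(3). Lemma~\ref{prec} combined with the connecting lemma (Theorem~\ref{connecting lemma}) produces genuine orbit segments in $\Lambda$ from neighborhoods of $u$ into arbitrarily small neighborhoods of $A$; a diagonal extraction as the target neighborhood shrinks to $A$ produces $y\in\Lambda\setminus A$ with $\omega(y)\subset A$, and the invariance of $A$ guarantees $\operatorname{orb}(y)\cap A=\emptyset$.

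Then I would apply Corollary~\ref{corollary of pliss}(2) to $y$: since every $z\in\omega(y)\subset A$ satisfies the required eventual $E$-contraction with constant $\mu_0<\lambda_2$, there are infinitely many $\lambda_2$-$E$-Pliss points $f^{k_n}(y)$ on $\operatorname{orb}^+(y)$ with $k_n\to+\infty$. If the time gaps between consecutive such Pliss points are unbounded, Lemma~\ref{property of pliss point}(1) yields a $\lambda_2$-bi-Pliss accumulation point $x\in\omega(y)\subset A$. Otherwise the gaps stay bounded, in which case nearly every late iterate of $y$ is $\lambda_2$-$E$-Pliss and the same conclusion follows via Lemma~\ref{property of pliss point}(3) applied to a suitably late first Pliss point. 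In either case, $\omega(x)\subset A$ is $\lambda_1$-$F$-weak. To secure $x\notin\omega(x)$, I would use the aperiodicity of $\Lambda$: if $x$ were recurrent, the closure of its orbit would be a nontrivial chain-transitive compact invariant subset of $A$, and Theorem~\ref{generic properties}(2)--(4) together with the connecting lemma would yield a periodic orbit inside $\Lambda$, a contradiction.

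The hard part will be the second step, promoting the topological relation $u\dashv_\Lambda A$ into the dynamical asymptotic statement $\omega(y)\subset A$. Since $A$ need not be an attractor of $\Lambda$, the argument must rely on the trapping property of $U$ produced by uniform $E$-contraction on $A$ together with the $\lambda^2$-domination, and on carefully executing the limit construction so that the resulting accumulation point actually lies in $A$ rather than merely in a neighborhood of $A$. This is precisely the content inherited from Section~4.1 of~\cite{w}, to which the text refers.
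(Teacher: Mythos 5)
There are several genuine gaps that make the proposal unworkable as written, and the overall strategy diverges from the paper's in ways that cut against the argument rather than simplifying it.

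First, the \emph{trapping-region} step is not justified. Uniform contraction of $E$ over (a neighborhood of) $A$ is a statement about the tangent dynamics along $E$; it in no way traps orbits near $A$ (consider a hyperbolic saddle: $E^s$ is uniformly contracted, yet nearby orbits escape). Being $\lambda_1$-$F$-weak likewise does not make $A$ attracting. Consequently, even if Lemma~\ref{prec} gives you, for each neighborhood $U_n$ of $A$, a point $y_n$ with $\operatorname{orb}^+(y_n)\subset U_n$, the $\omega$-limit of $y_n$ is only contained in the maximal invariant set of $U_n$ inside $\Lambda$, which can be much larger than $A$; and a ``diagonal extraction'' over $n$ produces a sequence of different points, not a single $y$ with $\omega(y)\subset A$. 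This is exactly the obstruction the paper handles via the dichotomy on $\hat{K}$, the closure of the union of all $\lambda_1$-$F$-weak subsets: when $\hat{K}$ is itself $F$-weak it is proved to be \emph{locally maximal} in $\Lambda$, which is what delivers the asymptotic orbit; when $\hat{K}$ is not $F$-weak, no such orbit is claimed, and the construction is of a completely different nature, using limits of $F$-weak sets with increasingly long $F$-Pliss-like estimates and an intersection $W^s(z_n)\cap W^u(y)$ of invariant manifolds. Your argument does not address this second case at all.

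Second, the argument for $x\notin\omega(x)$ is incorrect. Your approach locates the bi-Pliss point $x$ inside $\omega(y)\subset A$ (this is what Lemma~\ref{property of pliss point}(1) gives: the limit point of the end-Pliss points), so you have no control separating $x$ from $\omega(x)$. You then appeal to the claim that a chain-transitive invariant compact subset of $\Lambda$ would have to contain a periodic orbit, which is false: $\Lambda$ itself is chain transitive, compact, invariant and aperiodic, and Theorem~\ref{generic properties}(4) only gives accumulation by periodic orbits, not containment. In the paper both cases arrange $x$ to lie in the \emph{complement} of the $F$-weak set while $\omega(x)$ lies inside it (in Case 1, $x$ is on the orbit of $z\in\Lambda\setminus\hat K$, and $\hat K$ invariant forces $\operatorname{orb}(z)\cap\hat K=\emptyset$; in Case 2, $x\in W^u(y)$ with $y\notin K$ and $\omega(x)\subset K$); that is how $x\notin\omega(x)$ is obtained, not from aperiodicity.

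Finally, the bounded-gap alternative you invoke does not fit Lemma~\ref{property of pliss point}(3): if the gaps between consecutive $E$-Pliss points on $\operatorname{orb}^+(y)$ are bounded, then late iterates do have $E$-Pliss points in their past, so the hypothesis ``no $\lambda'$-$E$-Pliss point on $\operatorname{orb}^-(x)\setminus\{x\}$'' fails. The paper's sketch instead runs the analysis on $\operatorname{orb}^-(z)$, distinguishing whether there are finitely or infinitely many $E$-Pliss points there, combined with Lemma~\ref{property of pliss point}(2)--(3) and Corollary~\ref{corollary of pliss}. So the proposal misses the two-case structure and, where it overlaps with the paper, misapplies the Pliss-point lemmas and the final non-recurrence step.
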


The statement in Section 4.1 of~\cite{w} is for homoclinic classes and for the assumption that the bundle $E$ is not contracted. But the proof of Lemma~\ref{pliss point x and omega x} follows exactly the same arguments, so we only give a short explanation here.

\begin{proof}[Sketch of proof] We take the closure of the union of all $\lambda_1$-$F$-weak sets contained in $\Lambda$ and denote it by $\hat{K}$. We consider two cases: either $\hat{K}$ is still a $\lambda_1$-$F$-weak set or not.

If $\hat{K}$ is a $\lambda_1$-$F$-weak set, then one can prove that it is locally maximal in the aperiodic class $\Lambda$ (in this case, $\hat K\subsetneqq\Lambda$). Hence there is a point $z\in \Lambda\setminus \hat{K}$, such that $\omega(z)\subset \hat{K}$. By the maximality of the $\lambda_1$-$F$-weak property of $\hat{K}$, one can prove more that, there is at least one $\lambda_1$-$F$-Pliss point contained in $\alpha(z)$. By the domination of $E\oplus F$, for any $y\in\omega(z)$, there is an integer $n_y\geq 1$, such that $\prod_{i=0}^{n_y-1}\|Df|_{E(f^i{y})}\|<\lambda^{n_y}$. Hence there are infinitely many $\lambda_1$-$E$-Pliss points on $orb^+(z)$, by item $(2)$ of Corollary~\ref{corollary of pliss}. We then consider whether there are finitely or infinitely many $\lambda_1$-$E$-Pliss points on $orb^-(z)$. Using Lemma~\ref{property of pliss point} and Corollary~\ref{corollary of pliss}, one can get the conclusion in both subcases.

If otherwise $\hat{K}$ is not a $\lambda_1$-$F$-weak set, then one can prove that for any integer $T$, there are a $\lambda_1$-$F$-weak set $K\subset \Lambda$ and a $\lambda_1$-$E$-Pliss point $z\in K$, such that, for any $n\leq T$,
   \begin{displaymath}
     \prod_{i=0}^{n-1}\|Df^{-1}|_{F(f^{-1}(z))}\|\leq\lambda_1^n.
   \end{displaymath}
By choosing a sequence of integers $T_n\rightarrow+\infty$, one can get a sequence of points $z_n\in \Lambda$, such that $z_n$ converges to a $\lambda_1$-bi-Pliss point $y\in\Lambda\setminus K$. By Remark~\ref{uniform manifold for pliss points}, one can see that $W^s(z_n)\cap W^u(y)\neq\emptyset$ when $n$ is large enough. Then the intersection point $x\in W^s(z_n)\cap W^u(y)$ is a $\lambda_2$-bi-Pliss point if the points $z_n$ and $y$ are close enough. Moreover, one can see that $\omega(x)\subset K$ is a $\lambda_1$-$F$-weak set and $x\in\Lambda\setminus K$.
\end{proof}

\subsection{Existence of $E$-contracted periodic orbits by perturbations}

We take the $\lambda_2$-bi-Pliss point $x\in\Lambda$ from Lemma~\ref{pliss point x and omega x}. Then we have that $\omega(x)$ is a $\lambda_1$-$F$-weak set and $x\notin\omega(x)$. We have the following lemma to get $E$-uniformly contracted periodic orbits close to $\Lambda$ by $C^1$-small perturbations.

\begin{lem}\label{perturbations for E contracted orbits}
For any $C^1$-neighborhood $\mathcal{U}$ of $f$, any neighborhood $V$ of $\Lambda$, and any neighborhood $U_x$ of $x$, there are a diffeomorphism $g\in\mathcal{U}$ and a periodic point $q\in U_x$ of $g$ with period $\tau$, such that $orb(q,g)\subset V$, and
   \begin{displaymath}
     \prod_{i=0}^{\tau-1}\|Dg|_{E(g^i(q))}\|< {\lambda_1}^{\tau}.
   \end{displaymath}
\end{lem}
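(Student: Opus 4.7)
The plan is to combine the uniform contraction of $E$ on $\omega(x)$ (which holds because $\omega(x)$ is a $\lambda_1$-$F$-weak set, Remark~\ref{F-weak implies E-uniform}) with Proposition~\ref{asymptotic connecting}, applied in its $\omega$-limit form via Remark~\ref{rem of asymptotic connecting}(1), to produce a $g$-periodic orbit that spends at most boundedly many iterates outside a small neighborhood of $\omega(x)$ and arbitrarily many inside it.

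I would first strengthen Remark~\ref{F-weak implies E-uniform} to a \emph{robust block estimate}: there exist a rate $\mu'\in(\lambda^2/\lambda_1,\lambda_1)$, an integer $N_0$, a neighborhood $U\subset V$ of $\omega(x)$ with $U\cap U_x=\emptyset$ (possible since $x\notin\omega(x)$), and a $C^1$-neighborhood $\mathcal{U}_1\subset\mathcal{U}\cap\mathcal{U}_0$ of $f$, such that for every $g\in\mathcal{U}_1$ and every block $y,g(y),\dots,g^{n-1}(y)\in U$ of length $n\ge N_0$,
\[
\prod_{i=0}^{n-1}\|Dg|_{E(g^i(y))}\|<(\mu')^n.
\]
The starting point for this is that the bound $\prod_{i=0}^{n_y-1}\|Df|_{E(f^i(f^{-n_y}(y)))}\|<(\lambda^2/\lambda_1)^{n_y}$ with $n_y\le N$ uniform from Remark~\ref{F-weak implies E-uniform} can be iterated (peeling contracted blocks off the end of a forward product, absorbing a bounded residue into the rate) to yield the same type of inequality for every $z\in\omega(x)$ and every $n\ge N_0$. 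Continuity of $Dg|_E$ in $(y,g)$ on the maximal $g$-invariant set in $V_0$ then transports the estimate to $(g,y)\in\mathcal{U}_1\times U$. Let $B:=\sup\{\|Dg|_E\|:g\in\mathcal{U}_1\}$.

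Next I apply Proposition~\ref{asymptotic connecting} with $K=\Lambda$, the point $x$, neighborhoods $U$ of $\omega(x)$, $U_x$, $U_K=V$, and perturbation neighborhood $\mathcal{U}_1$, obtaining an integer $L$. For any $m$, it yields $g\in\mathcal{U}_1$ and a periodic $q\in U_x$ of period $\tau$ with $orb(q,g)\subset V$, at least $m$ iterates in $U$ and at most $L$ iterates outside $U$. The at most $L$ outside iterates split $orb(q,g)\cap U$ cyclically into at most $L$ maximal blocks. Bounding each long $U$-block (length $\ge N_0$) by the robust contraction estimate, each of the at most $L$ short $U$-blocks by $B^{N_0}$, and the outside iterates by $B^L$, and using $\#(orb(q,g)\cap U)\ge\tau-L$, one obtains
\[
\prod_{i=0}^{\tau-1}\|Dg|_{E(g^i(q))}\|\le D\,(\mu')^\tau,
\]
with $D$ depending only on $B$, $L$, $N_0$ and $\mu'$, not on $m$. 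Since $\mu'<\lambda_1$, taking $m$ large enough forces $\tau\ge m$ large enough that $D(\mu'/\lambda_1)^\tau<1$, which proves the lemma.

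The delicate step is the passage from Remark~\ref{F-weak implies E-uniform} to the robust block estimate, since a $g$-orbit inside $U$ does not shadow any fixed $f$-orbit in $\omega(x)$. I would handle this by a finite-window compactness argument: for a sufficiently long window $n_0$, the function $(y,g)\mapsto\prod_{i=0}^{n_0-1}\|Dg|_{E(g^i(y))}\|$ is continuous and, on the compact set $\omega(x)\times\{f\}$, strictly smaller than $\mu^{n_0}$ for some $\mu<\mu'$; hence it remains below $(\mu')^{n_0}$ throughout a neighborhood, and concatenating such windows along any long $U$-block yields the required rate.
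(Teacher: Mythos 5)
Your proposal is correct and follows essentially the same route as the paper: uniform $E$-contraction on $\omega(x)$ coming from Remark~\ref{F-weak implies E-uniform} (because $\omega(x)$ is a $\lambda_1$-$F$-weak set with domination), transported by continuity to $g$-orbits lying in a small neighborhood $U$ of $\omega(x)$ for $g$ close to $f$, then combined with Proposition~\ref{asymptotic connecting} and a counting argument in which the $\leq L$ iterates outside $U$ contribute only a bounded factor that is absorbed by the rate gap between the contraction rate and $\lambda_1$ once the period is large. The only cosmetic difference is that the paper works directly with a fixed window length $T$ and counts disjoint $T$-segments inside $U$, while you first upgrade to a robust rate valid on all sufficiently long $U$-blocks and then sum over maximal blocks; both bookkeeping schemes yield the same estimate.
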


\begin{proof}
Take a $C^1$-neighborhood $\mathcal{U}$ of $f$, a neighborhood $V$ of $\Lambda$ and a neighborhood $U_x$ of $x$. Without loss of generality, we assume that $\mathcal{U}\subset\mathcal{U}_0$, $V\subset V_0$ and $U_x\subset V_0$.

By Remark~\ref{F-weak implies E-uniform} and the compactness of $\omega(x)$, there is an integer $T\geq 1$, such that, for any $y\in\omega(x)$,
   \begin{displaymath}
     \prod_{i=0}^{T-1}\|Df|_{E(f^iy)}\|< {\lambda}^{T}.
   \end{displaymath}
Then there are a neighborhood $U\subset V$ of $\omega(x)$, and a $C^1$-neighborhood $\mathcal{V}\subset\mathcal{U}$ of $f$, such that, for any point $y$ with $orb(y,g)\subset V$, if $g^i(y)\subset U$ for any $0\leq i\leq T$, then it holds that
   \begin{displaymath}
     \prod_{i=0}^{T-1}\|Dg|_{E(g^iy)}\|< {\lambda}^{T}.
   \end{displaymath}

Take $C=sup\{\|Dg\|:g\in\mathcal{V}\}$. Take a small neighborhood $U_x\subset V$ of $x$. By Proposition~\ref{asymptotic connecting}, there is an integer $L$ associated to $(f,\mathcal{V},U,U_x,V)$. Take an integer $k\in\mathbb{N}$, such that, for any $n\geq k$, $\lambda^{nT}\cdot C^{LT+T}<\lambda_1^{nT+LT+T}$. Take $m=kT+LT+T$, then by Proposition~\ref{asymptotic connecting}, there are a diffeomorphism $g\in\mathcal{V}\subset\mathcal{U}$ and a periodic point $q\in U_x$ of $g$, such that $orb(q,g)\subset V$ satisfies $\#(orb(p,g)\cap U)\geq m$, and $\#(orb(p,g)\setminus U)\leq L$. Denote by $\tau$ the period of $q$ under the iterate of $g$. It can be written as $\tau=nT+LT+r$, where $0\leq r< T$. Then by the distribution of the points of $orb(q,g)$, there are at least $n$ pieces of segments $\{f^{l_i}(q),f^{l_i+1}(q),\cdots,f^{l_i+T-1}(q)\}_{1\leq i\leq n}$ that are pairwise disjoint and contained in $U$. Hence we have that
   \begin{displaymath}
     \prod_{i=0}^{\tau-1}\|Dg|_{E(g^i(q))}\|\leq C^{LT+T}\cdot \prod_{j=1}^{n}\prod_{i=l_j}^{l_j+T-1}\|Dg|_{E(g^i(q))}\|\leq {\lambda}^{nT}\cdot N^{LT+T}<{\lambda_1}^{nT+LT+T}< {\lambda_1}^{\tau}.
   \end{displaymath}
This finishes the proof of Lemma~\ref{perturbations for E contracted orbits}.
\end{proof}

\subsection{Existence of $E$ contracted periodic orbits for $f$: a Baire argument}

From Lemma~\ref{perturbations for E contracted orbits}, we obtain some $E$ contracted periodic orbits close to the aperiodic class $\Lambda$ that has a point close to the $\lambda_2$-bi-Pliss point $x\in\Lambda$ by $C^1$-small perturbations of $f$. Then, with a standard Baire argument (see for example~\cite{gw}), we can obtain such periodic orbits for the generic diffeomorphism $f$ itself.

\begin{lem}\label{E contracted orbits for f}
There is a residual subset $\mathcal{R}_1\subset\diff^1(M)$, such that, if $f\in\mathcal{R}_1$, then for any neighborhood $V$ of $\Lambda$, and any neighborhood $U_x$ of the $\lambda_2$-bi-Pliss point $x$, there is a periodic point $q\in U_x$ of $f$ with period $\tau$, such that $orb(q)\subset V$, and
   \begin{displaymath}
     \prod_{i=0}^{\tau-1}\|Df|_{E(f^i(q))}\|\leq {\lambda_1}^{\tau}.
   \end{displaymath}
\end{lem}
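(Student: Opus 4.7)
The plan is a standard Baire-category ``open-or-forbidden-neighborhood'' argument, with Lemma~\ref{perturbations for E contracted orbits} as the main perturbative input. Fix once and for all a countable basis of open sets $\{W_n\}_{n\geq 1}$ of $M$. For every quadruple $(n,m,\mu,d)$ with $n,m\geq 1$, $\mu\in\mathbb{Q}\cap(0,1)$, and $d\in\{1,\dots,\dim M-1\}$, define $\mathcal{O}(n,m,\mu,d)\subset\diff^1(M)$ to consist of those $g$ such that: (i) the maximal invariant set $\bigcap_{k\in\mathbb{Z}}g^k(\overline{W_n})$ carries a $(1,\mu^2)$-dominated splitting $E_g\oplus F_g$ with $\dim E_g=d$; (ii) $g$ admits a hyperbolic periodic point $q\in W_m$ of period $\tau$ with $orb(q,g)\subset W_n$; (iii) $\prod_{i=0}^{\tau-1}\|Dg|_{E_g(g^i(q))}\|<\mu^\tau$. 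Each $\mathcal{O}(n,m,\mu,d)$ is open in $\diff^1(M)$: dominated splittings with a fixed constant persist on a $C^1$-neighborhood (as recalled in the preamble of the section), hyperbolic periodic orbits continue robustly, and the inequalities in (ii) and (iii) are strict.

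Let $\mathcal{N}(n,m,\mu,d)$ be the interior of $\diff^1(M)\setminus\mathcal{O}(n,m,\mu,d)$, so that $\mathcal{O}(n,m,\mu,d)\cup\mathcal{N}(n,m,\mu,d)$ is open and dense. Set
\[
\mathcal{R}_1:=\mathcal{R}_0\cap\bigcap_{n,m,\mu,d}\bigl(\mathcal{O}(n,m,\mu,d)\cup\mathcal{N}(n,m,\mu,d)\bigr),
\]
which is residual as a countable intersection of open dense sets. Take $f\in\mathcal{R}_1$ satisfying the standing hypotheses of this section, and let $x\in\Lambda$ be the bi-Pliss point supplied by Lemma~\ref{pliss point x and omega x}. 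Given arbitrary neighborhoods $V$ of $\Lambda$ and $U_x$ of $x$, select basis elements $W_n\subset V\cap V_0$ with $\Lambda\subset W_n$, and $W_m\subset U_x$ with $x\in W_m$; fix a rational $\mu\in(\lambda,\lambda_1)$ and set $d=\dim E$. Since $W_n\subset V_0$, the splitting of $\Lambda$ extends to $\bigcap_k f^k(\overline{W_n})$ and the extended $E_f$ agrees with the continuation of $E$. Now either $f\in\mathcal{O}(n,m,\mu,d)$, in which case the periodic point supplied by (ii)--(iii) is the desired orbit (using $\mu^\tau<\lambda_1^\tau$), or $f\in\mathcal{N}(n,m,\mu,d)$; but then Lemma~\ref{perturbations for E contracted orbits} applied with $\mathcal{U}=\mathcal{N}(n,m,\mu,d)$, $V=W_n$, $U_x=W_m$, and with $\mu$ playing the role of $\lambda_1$, produces a diffeomorphism in $\mathcal{N}(n,m,\mu,d)\cap\mathcal{O}(n,m,\mu,d)=\emptyset$, a contradiction.

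The main conceptual obstacle I anticipate is that $\mathcal{R}_1$ must be defined intrinsically, independently of the $f$-dependent data $\Lambda$, $x$, and $E$; this is resolved by quantifying over all basis open sets, all admissible bundle dimensions, and all rational parameters $\mu$, so that the appropriate index quadruple is selected only a posteriori, once $f$, $\Lambda$, and $x$ have been fixed. A secondary technical point I must verify is that Lemma~\ref{perturbations for E contracted orbits} remains valid with any constant in $(\lambda,1)$ in place of $\lambda_1$, which is visible from its proof since the only use of $\lambda_1$ is through the inequality $\lambda<\lambda_1<1$ when choosing the integer $k$.
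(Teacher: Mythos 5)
Your proof follows essentially the same Baire-category scheme as the paper's: a countable family of open sets $\mathcal{O}$ (encoding the desired periodic orbit with the $E$-contraction estimate) together with the interiors $\mathcal{N}$ of their complements, their union being open and dense, and the residual set is the countable intersection; then Lemma~\ref{perturbations for E contracted orbits} shows that $f$ lies in $\overline{\mathcal{O}}$ for the relevant indices, which rules out $\mathcal{N}$ and forces $f\in\mathcal{O}$. This matches the paper's construction of $\mathcal{H}_{m,n,j}$ and $\mathcal{N}_{m,n,j}$.

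There is, however, one genuine gap in the index bookkeeping. You take a countable basis $\{W_n\}$ of $M$ and later ``select basis elements $W_n\subset V\cap V_0$ with $\Lambda\subset W_n$.'' A single element of a countable basis of a manifold need not contain the compact set $\Lambda$: think of $\Lambda$ a Cantor set and the basis consisting of small rational balls --- any ball large enough to contain $\Lambda$ may no longer fit inside $V\cap V_0$. This is precisely why the paper does not index by basis elements but by the (still countable) family $(V_n)$ of \emph{finite unions} of basis elements; since $\Lambda$ is compact and sits inside the open set $V\cap V_0$, a finite subcover from the basis yields a $V_n$ with $\Lambda\subset V_n\subset V\cap V_0$. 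You should replace your indexing collection for the $\Lambda$-neighborhood by such finite unions (the $U_x$-neighborhood of the point $x$ can stay a single basis element). With that correction the argument closes.

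Two secondary remarks. First, your set $\mathcal{O}(n,m,\mu,d)$ carries the extra condition (i) on the maximal invariant set of $\overline{W_n}$, which is unnecessary: the paper only requires a dominated splitting of dimension $d$ over the periodic orbit itself, which is already open and, by uniqueness of dominated splittings of a given index, automatically agrees with the extension of $E\oplus F$ once the orbit lies in $V_0$. Your version works too (openness of (i) follows from the standard robustness of dominated splittings with a strict constant plus upper semicontinuity of $g\mapsto\bigcap_k g^k(\overline{W_n})$), but it forces you to introduce the rational parameter $\mu$ for countability, a layer the paper avoids. Second, when you invoke Lemma~\ref{perturbations for E contracted orbits} with $\mathcal{U}=\mathcal{N}(n,m,\mu,d)$ you should intersect with $\mathcal{U}_0$ so that the produced $g$ carries the extended splitting over $\overline{V_0}$; the lemma's proof already does this ``without loss of generality,'' and you correctly note that its $\lambda_1$ can be replaced by any constant in $(\lambda,1)$. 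Both the paper and your proof implicitly use a further arbitrarily small perturbation to make the produced periodic orbit hyperbolic; that is harmless since the inequalities are strict.
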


\begin{proof}
Take a countable basis $(U_m)_{m\geq 1}$ of $M$. Denote by $(V_n)_{n\geq 1}$ the countably collection of sets such that each $V_n$ is a union of finitely many sets of the basis $(U_m)_{m\geq 1}$.

Let $\mathcal{H}_{m,n,j}$ be the set of $C^1$ diffeomorphisms $h$, satisfying the following properties. \\
\emph{There is a hyperbolic periodic orbit $orb(q,h')$, such that
\begin{itemize}
\item $orb(q,h')\subset V_n$ and $orb(q,h')\cap U_m\neq \emptyset$,
\item there is a dominated splitting $T_{orb(q,h')}M=E\oplus F$ with $dim(E)=j$, and, denoting by $\tau$ the period of $q$, then
   \begin{displaymath}
     \prod_{i=0}^{\tau-1}\|Dh'|_{E(h'^i(q))}\|< {\lambda_1}^{\tau}.
   \end{displaymath}
\end{itemize}}

Notice that $\mathcal{H}_{m,n,j}$ is an open subset of $\diff^1(M)$. Take $\mathcal{N}_{m,n,j}=\diff^1(M)\setminus\overline{\mathcal{U}_{m,n,j}}$, then the set $\mathcal{H}_{m,n,j}\cup\mathcal{N}_{m,n,j}$ is an open and dense subset of $\diff^1(M)$.

Let
   \begin{center}
     $\mathcal{R}_1=\mathcal{R}_0\cap (\bigcap_{m,n\geq 1,1\leq j\leq d-1}(\mathcal{H}_{m,n,j}\cup\mathcal{N}_{m,n,j}))$,
   \end{center}
where $\mathcal{R}_0$ is taken from Theorem~\ref{generic properties}. Then the set $\mathcal{R}_1$ is a residual subset of $\diff^1(M)$. We now prove that the conclusion of Lemma~\ref{E contracted orbits for f} is valid for the residual subset $\mathcal{R}_1$.

Take any diffeomorphism $f\in\mathcal{R}_1$. For any neighborhood $V$ of $\Lambda$, and any neighborhood $U_x$ of $x$, there are two integers $m$ and $n$, such that $V_n\subset V$ and $x\in U_m\subset U_x$. By Lemma~\ref{perturbations for E contracted orbits}, for any neighborhood $\mathcal{V}$ of $f$, there is a diffeomorphism $g\in\mathcal{V}$ such that $g\in \mathcal{H}_{m,n,j}$, where $j=dim(E)$. This means that $f\in\overline{\mathcal{H}_{m,n,j}}$. Then $f\notin \mathcal{N}_{m,n,j}$, and thus $f\in \mathcal{H}_{m,n,j}$. The proof of Lemma~\ref{E contracted orbits for f} is finished by the construction of $\mathcal{H}_{m,n,j}$.
\end{proof}

\subsection{The aperiodic class $\Lambda$ hits periodic orbits: a contradiction}

Let $\mathcal{R}=\{f\in\diff^1(M):f\in\mathcal{R}_0\cap\mathcal{R}_1,\text{ and $f$ satisfies the properties in Proposition~\ref{asymptotic connecting}}\}$. Then $\mathcal{R}$ is a residual subset of $\diff^1(M)$. We have the following lemma.

\begin{lem}\label{contradiction}
For any diffeomorphism $f\in\mathcal{R}$ that satisfies the assumptions above, there is a point $y\in \Lambda$ and a periodic point $q$ of $f$, such that $W^u(y)\cap W^s(q)\neq\emptyset$.
\end{lem}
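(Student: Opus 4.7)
The plan is to derive a contradiction with the aperiodicity of $\Lambda$ by producing, for some periodic point $q$ of $f$, a heteroclinic intersection $W^u(x)\cap W^s(q)\neq\emptyset$. Once this exists, the Lyapunov stability of $\Lambda$ forces $W^u(x)\subset\Lambda$, so any intersection point has its $\omega$-limit equal to $orb(q)$ inside $\Lambda$; hence $orb(q)\subset\Lambda$, contradicting aperiodicity. To produce such an intersection, I fix $\lambda'_1\in(\lambda_1,1)$ and take shrinking sequences of neighborhoods $V_n\downarrow\Lambda$ and $U_{x,n}\downarrow\{x\}$; Lemma~\ref{E contracted orbits for f} then provides hyperbolic periodic points $q_n\in U_{x,n}$ (hyperbolic because $f\in\mathcal{R}_0$ is Kupka--Smale) with $orb(q_n)\subset V_n$ and $\prod_{i=0}^{\tau_n-1}\|Df|_{E(f^i(q_n))}\|\leq\lambda_1^{\tau_n}$.

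The heart of the argument is to exhibit a disk in $W^s(q_n)$ through $q_n$, tangent to $E(q_n)$, of size bounded below uniformly in $n$. Applying the Pliss Lemma~\ref{pliss lemma}(1) to $orb(q_n)$, and using periodicity together with $\lambda_1<\lambda'_1$ to promote the one-period estimate to all $n\geq 1$, gives a $\lambda'_1$-$E$-Pliss point $p_n=f^{k_n}(q_n)$ on the orbit, which by Remark~\ref{uniform manifold for pliss points} carries a local stable disk $W^s_\delta(p_n)\subset W^s(p_n)$ tangent to $E(p_n)$ of uniform size $\delta>0$. Exploiting the block structure of $orb(q_n)$ inherited from Proposition~\ref{asymptotic connecting} and the proof of Lemma~\ref{perturbations for E contracted orbits}---at most $L$ iterates of $orb(q_n)$ lie outside a fixed neighborhood of $\omega(x)$ on which $E$ is strongly contracted---one aims to locate such a Pliss point within a uniform number $N_0$ of iterates of $q_n$, so $k_n\leq N_0$. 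The pull-back $D_n:=f^{-k_n}(W^s_\delta(p_n))\subset W^s(q_n)$ is then a disk through $q_n$, tangent to $E(q_n)$, of size at least $\delta_0>0$ uniformly in $n$, since $f^{-k_n}$ has bounded $C^1$ distortion when $k_n\leq N_0$.

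To conclude, the bi-Pliss hypothesis on $x$ provides, by Remark~\ref{uniform manifold for pliss points} applied to $f^{-1}$, a local unstable disk $W^u_{\delta_u}(x)\subset W^u(x)$ tangent to $F(x)$ of uniform size $\delta_u>0$. As $q_n\to x$ and $E(q_n)\to E(x)$ (by continuity of the splitting on $V_0$), passing to a subsequence where $k_n$ is constant, the disks $D_n$ converge in $C^1$ to a disk through $x$ tangent to $E(x)$; since $E(x)\oplus F(x)=T_xM$, this limit disk meets $W^u_{\delta_u}(x)$ transversally at $x$, and by persistence of transverse intersections $D_n\cap W^u_{\delta_u}(x)\neq\emptyset$ for all large $n$. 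Taking $y=x$ and $q=q_n$ yields the lemma. The main obstacle I expect is precisely establishing the uniform bound $k_n\leq N_0$: on an arbitrary $E$-contracted periodic orbit, the Pliss Lemma only guarantees Pliss points with positive density, so without invoking the specific block structure of $orb(q_n)$ coming from the perturbation construction there is no a priori reason for a Pliss point to lie within a bounded number of iterates of $q_n$, and the pull-back would destroy the uniform lower bound on the disk size needed for the final transversality argument.
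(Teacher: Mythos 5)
You correctly identify the crux and are honest about the gap; unfortunately, the gap is fatal, and the way you hope to fill it does not work. Your plan insists on using $y=x$ and the periodic point $q_n$ itself, which forces you to produce a stable disk of uniform size \emph{through $q_n$}. For this you want a $\lambda'_1$-$E$-Pliss point within a uniformly bounded number $N_0$ of iterates of $q_n$. The Pliss Lemma gives no such bound: it yields a positive density of Pliss points on $orb(q_n)$, but the first one can appear arbitrarily far from the starting index, so $k_n$ can go to infinity with $\tau_n$. You propose to save this via the ``block structure'' of $orb(q_n)$ coming from Proposition~\ref{asymptotic connecting} and Lemma~\ref{perturbations for E contracted orbits}, but that structure is not available at this stage: the Baire argument in Lemma~\ref{E contracted orbits for f} passes from perturbed diffeomorphisms $g$ back to $f$ through the open sets $\mathcal{H}_{m,n,j}$, and those sets only record that the orbit lies in $V_n$, meets $U_m$, and satisfies the orbit-wise $E$-contraction estimate. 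The constraint ``at most $L$ points outside a neighborhood of $\omega(x)$'' is \emph{not} encoded in $\mathcal{H}_{m,n,j}$, so the periodic orbits of the generic $f$ produced by Lemma~\ref{E contracted orbits for f} need not inherit any such block structure. Thus $k_n\leq N_0$ is unfounded and the pull-back $f^{-k_n}(W^s_\delta(p_n))$ can degenerate.

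The paper's proof sidesteps this entirely by noticing that the statement of Lemma~\ref{contradiction} does not require $y=x$ or $q$ near $x$. One looks at \emph{consecutive} $\lambda_1$-$E$-Pliss points on $orb(q_n)$ and splits into two cases. If the gaps between consecutive Pliss points are uniformly bounded, then $\overline{\bigcup_n orb(q_n)}$ is uniformly $E$-contracted and \emph{every} point (in particular $q_n$) carries a uniform stable disk; this is exactly the situation where your argument would go through, with $y=x$, $q=q_n$. If the gaps are unbounded, one passes to a subsequence with $l_1^n-k_1^n\to\infty$ and applies item~$(1)$ of Lemma~\ref{property of pliss point}: any limit $y\in\Lambda$ of the Pliss points $f^{l_1^n}(q_n)$ is a $\lambda_1$-\emph{bi}-Pliss point, hence carries a uniform unstable disk, while the points $f^{l_1^n}(q_n)$ are $E$-Pliss and carry uniform stable disks. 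The heteroclinic intersection is then $W^u(y)\cap W^s\bigl(f^{l_1^n}(q_n)\bigr)\neq\emptyset$ for $n$ large, with $y$ possibly different from $x$ and $q=f^{l_1^n}(q_n)$ possibly far from $x$. Without this second case your proof cannot be completed.
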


\begin{proof}
By Lemma~\ref{E contracted orbits for f}, there is a sequence of periodic points $\{q_n\}_{n\geq 1}$ which converges to $x$ such that $orb(q_n)$ accumulates to a subset of $\Lambda$, and
   \begin{displaymath}
     \prod_{i=0}^{\tau_n-1}\|Df|_{E(f^i(q_n))}\|< {\lambda_1}^{\tau},
   \end{displaymath}
where $\tau_n$ is the period of $q_n$, for any $n\geq 1$. Moreover, since $\Lambda$ is an aperiodic class, we have that $\tau_n$ goes to $+\infty$.

By the Pliss Lemma, there are $\lambda_1$-$E$-Pliss points on $orb(q_n)$. Consider all pairs of consecutive $\lambda_1$-$E$-Pliss points $(f^{k_i^n}(q_n),f^{l_i^n}(q_n))_{1\leq i\leq m_n}$ on $orb(q_n)$. We consider whether the sequence of numbers $(l_i^n-k_i^n)_{n\geq 1,1\leq i\leq m_n}$ is uniformly bounded or not.

\paragraph{Case 1.} If there is a number $N$ such that $0< l_i^n-k_i^n\leq N$, for all $i$ and all $n\geq 1$, then the set $\overline{\bigcup_{n\geq 1}orb(q_n)}$ is an $E$-contracted set. Hence any $q_n$ has a uniform stable manifold with dimension $dim(E)$ by Remark~\ref{uniform manifold for pliss points}. Since $x$ is a $\lambda_2$-bi-Pliss point, when $q_n$ is close enough to $x$, we have that $W^u(x)\cap W^s(q_n)\neq\emptyset$. Then we take $y=x$ and $q=q_n$.

\paragraph{Case 2.} We consider the case where the sequence of numbers $(l_i^n-k_i^n)_{n\geq 1,1\leq i\leq m_n}$ is not uniformly bounded. By considering a subsequence if necessary and to simplify the notations, we assume that the sequence of consecutive $\lambda_1$-$E$-Pliss points $(f^{k_1^n}(q_n),f^{l_1^n}(q_n))$ satisfies $l_1^n-k_1^n\rightarrow +\infty$ as $n\rightarrow +\infty$. By item $(1)$ of Lemma~\ref{property of pliss point}, any limit point $y\in\Lambda$ of the sequence $\{f^{l_1^n}(q_n)\}$ is a $\lambda_1$-bi-Pliss point. Hence when $f^{l_1^n}(q_n)$ is close enough to $y$, we have that $W^u(y)\cap W^s(f^{l_1^n}(q_n))\neq\emptyset$. Then we take $q=f^{l_1^n}(q_n)$.
\medskip

The proof of Lemma~\ref{contradiction} is now completed.
\end{proof}

\begin{proof}[End of the proof of Theorem A]
We prove Theorem A by contradiction. If neither $E$ is contracted nor $F$ is expanded, then by Lemma~\ref{contradiction}, we have that $W^u(y)\cap W^s(q)\neq\emptyset$ for some $y\in\Lambda$ and some periodic point $q$. Since $\Lambda$ is a Lyapunov stable aperiodic class, we have that $W^u(y)\subset \Lambda$, hence $q\in\Lambda$, which contradicts the fact that $\Lambda$ contains no periodic point. Hence one of the two bundles $E$ and $F$ is hyperbolic, which is a contradiction of the assumption.
\end{proof}

\section{Connecting a set and a point by periodic orbits: proof of Proposition~\ref{asymptotic connecting}}

In this section, we prove Proposition~\ref{asymptotic connecting}.

Consider a diffeomorphism $f$ that satisfies the properties stated in Theorem~\ref{generic properties}, a chain-transitive set $K$ of $f$, a point $x\in K$ satisfying $x\notin \alpha(x)$ (hence $x$ is not a periodic point) and a $C^1$-neighborhood $\mathcal{U}$ of $f$. By taking a smaller neighborhood if necessary, we assume that the elements of $\mathcal{U}$ are of the form $f\circ\phi$ with $\phi\in\mathcal{V}$, such that $\mathcal{V}$ is a $C^1$-neighborhood of $Id$ which satisfies the property (F):\\
\textit{(F) For any perturbations $\phi $ and $\phi'$ of $Id$ in $\mathcal{V}$ with disjoint support, the composed perturbation $\phi\circ \phi'$ is still in $\mathcal{V}$.}\\
For the $C^1$-neighborhood $\mathcal{U}$ of $f$, there is an integer $N$ given by Theorem~\ref{connecting lemma}.

We fix the triple $(f,\mathcal{U},N)$, and fix the three neighborhoods $U_x$ of $x$, $U$ of $\alpha(x)$ and $U_K$ of $K$ from now on. We consider two cases.
\begin{itemize}
\item \textbf{The non-periodic case:} there is a point $z\in\alpha(x)$ such that $z$ is not a periodic point of $f$ with period less than or equal to $N$.
\item \textbf{The periodic case:} any point contained in $\alpha(x)$ is a periodic point of $f$ with period less than or equal to $N$.
\end{itemize}

\subsection{The non-periodic case.}
We construct three perturbation neighborhoods at three points, and choose segments of orbits that connect them one by one and then we use the connecting lemma to get a periodic orbit by perturbations. We point out here that the perturbation neighborhoods are pairwise disjoint and the segments of orbits are also pairwise disjoint. Moreover, any perturbation neighborhood is disjoint with the segment of orbit that connects the two other perturbation neighborhoods.

\subsubsection{Choice of points and connecting orbits}

\paragraph{\textmd{\emph{The perturbation neighborhoods at $x$.}}} We take two small neighborhoods $V'_x\subset V_x\subset U_x$ of $x$, such that the following properties are satisfied:
\begin{itemize}
\item $(\bigcup_{i=0}^N f^i(\overline{V_x}))\cap \alpha(x)=\emptyset$,
\item $\bigcup_{i=0}^N f^i(V_x)\subset U_K$,
\item $V'_x\subset V_x$ satisfy Theorem~\ref{connecting lemma} for the triple $(f,\mathcal{U},N)$.
\end{itemize}

\paragraph{\textmd{\emph{The point $y$.}}} Since all periodic points of $f$ are hyperbolic, we take a small neighborhood $V\subset U$ of $\alpha(x)$, such that
\begin{itemize}
\item $V\subset U_K$ and $\overline{V}\cap (\bigcup_{i=0}^N f^i(\overline{V_x}))=\emptyset$,
\item there is no periodic point with period less than or equal to $N$ in $\overline{V}\setminus K$.
\end{itemize}

Since $K$ is a chain transitive set and $x\in K$, we have that $x\dashv_K \alpha(x)$. By item $3$ and $4$ of Theorem~\ref{generic properties}, we have that $x\prec_{U_K} \alpha(x)$. By Lemma~\ref{prec}, there is a point $y\in V\setminus \alpha(x)$, such that $x\prec_{U_K} y\prec_V \alpha(x)$, and $orb^+(y)\subset V$. Since $x\notin V$, we have that $y\notin \overline{orb^-(x)}$. Moreover, there is an integer $n_0$, such that, for any $i\geq n_0$, $f^{-i}(x)\subset V$.

\paragraph{\textmd{\emph{The perturbation neighborhoods at $y$, the connecting orbit from $x$ to $y$ and the number $L$.}}} By the choice the neighborhood $V$, the point $y$ is not a periodic point with period less than or equal to $N$. By the facts that $y\notin \overline{orb^-(x)}$ and $orb^+(y)\subset V$, there are two small neighborhoods $V'_y\subset V_y$ of $y$, such that the following properties are satisfied:
\begin{itemize}
\item $\bigcup_{i=0}^N f^i(V_y)\subset V$, which implies that $(\bigcup_{i=0}^N f^i(\overline{V_y}))\cap (\bigcup_{i=0}^N f^i(\overline{V_x}))=\emptyset$,
\item $\overline{orb^-(x)}\cap(\bigcup_{i=0}^N f^i(\overline{V_y}))=\emptyset$, which implies that $(\bigcup_{i=0}^N f^i(\overline{V_y}))\cap \alpha(x)=\emptyset$,
\item $V'_y\subset V_y$ satisfy Theorem~\ref{connecting lemma} for the triple $(f,\mathcal{U},N)$.
\end{itemize}
\medskip

Since $x\prec_{U_K} y$, there is a piece of orbit segment $\{w_1,f(w_1),\cdots,f^{n_1}(w_1)\}\subset U_K$, such that $w_1\in V'_x$ and $f^{n_1}(w_1)\in V'_y$. By the choice of $V_y$, one can see that $w_1\notin orb^-(x)$. Take $L=n_0+n_1+N$. Then we take an integer $m\in\mathbb{N}$.

\paragraph{\textmd{\emph{The point $z$ and the perturbation neighborhoods at $z$.}}} Now take $z\in\alpha(x)$ such that $z$ is not a periodic point of $f$ with period less than or equal to $N$. Then we have that $y\prec_V z$. By the fact that $z\in\alpha(x)$ and $(\bigcup_{i=0}^N f^i(\overline{V_y}))\cap \alpha(x)=\emptyset$, we can take two neighborhoods $V'_z\subset V_z$ of $z$, such that the following properties are satisfied:
\begin{itemize}
\item $\bigcup_{i=0}^N f^i(V_z)\subset V$, which implies that $(\bigcup_{i=0}^N f^i(\overline{V_z}))\cap (\bigcup_{i=0}^N f^i(\overline{V_x}))=\emptyset$,
\item $(\bigcup_{i=0}^N f^i(\overline{V_z}))\cap (\bigcup_{i=0}^N f^i(\overline{V_y}))=\emptyset$,
\item $\{w_1,f(w_1),\cdots,f^{n_1}(w_1)\}\cap (\bigcup_{i=0}^N f^i(\overline{V_z}))=\emptyset$,
\item $f^{-i}(x)\notin \bigcup_{i=0}^N f^i(V_z)$, for any $0\leq i\leq n_0+m$,
\item $V'_z\subset V_z$ satisfy Theorem~\ref{connecting lemma} for the triple $(f,\mathcal{U},N)$.
\end{itemize}

\paragraph{\textmd{\emph{The connecting orbits from $y$ to $z$ and from $z$ to $x$.}}} Since $y\prec_V z$, there is a piece of orbit segment $\{w_2,f(w_2),\cdots,f^{n_2}(w_2)\}\subset V$, such that $w_2\in V'_y$ and $f^{n_2}(w_2)\in V'_z$. By the choice of $V_y$, we have that $w_2\notin orb^-(x)$. By the choice of the neighborhood $V$, we have that $\{w_2,f(w_2),\cdots,f^{n_2}(w_2)\}\cap (\bigcup_{i=0}^N f^i(\overline{V_x}))=\emptyset$.

Since $z\in\alpha(x)$, there is $n_3$, such that $f^{-n_3}(x)\in V'_z$. Since $V_y\cap orb^-(x)=\emptyset$, we have that $\{f^{-n_3}(x),f^{-n_3+1}(x),\cdots,x\}\cap (\bigcup_{i=0}^N f^i(\overline{V_y}))=\emptyset$. Moreover, by the choice of $V_z$, we have that $n_3> n_0+m$.
\medskip

To sum up, we have obtained three pairwise disjoint perturbation neighborhoods $\bigcup_{i=0}^N f^i(V_x)$, $\bigcup_{i=0}^N f^i(V_y)$ and $\bigcup_{i=0}^N f^i(V_z)$, and three pairwise disjoint pieces of orbit segment $\{w_1,f(w_1),\cdots,\\
f^{n_1}(w_1)\}$, $\{w_2,f(w_2),\cdots,f^{n_2}(w_2)\}$ and $\{f^{-n_3}(x),f^{-n_3+1}(x),\cdots,x\}$ that connect the perturbation neighborhoods one by one. Moreover, all these perturbation neighborhoods and pieces of orbit segments are contained in the neighborhood $U_K$ of $K$, and each perturbation neighborhood is disjoint with the piece of orbit segment that connects the other two perturbation neighborhoods.

\subsubsection{The connecting process}
Now, by Theorem~\ref{connecting lemma}, we will do perturbations of $f$ in $\mathcal{U}$ on the three pairwise disjoint neighborhoods $\bigcup_{i=0}^N f^i(V_x)$, $\bigcup_{i=0}^N f^i(V_y)$ and $\bigcup_{i=0}^N f^i(V_z)$. By Remarque 4.3 of~\cite{bc} and the Property (F), one can get a diffeomorphism in $\mathcal{U}$ with the composition of the three perturbations.

\paragraph{\textmd{\emph{The perturbation at $x$.}}} The point $f^{-n_3}(x)$ has a positive iterate $x\in V'_x$ and the point $f^{n_1}(w_1)$ has a negative iterate $w_1\in V'_x$. By Theorem~\ref{connecting lemma}, there is a diffeomorphism $f_1\in\mathcal{U}$, such that:
\begin{itemize}
\item $f_1$ coincides with $f$ outside $\bigcup_{i=0}^N f^i(V_x)$,
\item the point $f^{n_1}(w_1)$ is on the positive orbit of $f^{-n_3}(x)$ under $f_1$.
\end{itemize}
Moreover, the piece of orbit segment $\{f^{-n_3}(x),f_1(f^{-n_3}(x)),\cdots f^{n_1}(w_1)\}$ under $f_1$ satisfies the following properties:
\begin{itemize}
\item it is contained in $\bigcup_{i=0}^{n_3}\{f^{-i}(x)\}\cup(\bigcup_{i=0}^{N-1}f^i(V_x))\cup\bigcup_{i=0}^{n_1}\{f^{i}(w_1)\}$,
\item it intersect $V_x$ and has at most $n_0+n_1$ points outside $V$, where $n_0+n_1<L$,
\item it contains the piece of orbit segment $\{f^{-n_0}(x),f^{-n_0-1}(x),\cdots,f^{-n_0-m}(x)\}$ under $f$.
\end{itemize}
We take a point $p\in V_x\cap \{f^{-n_3}(x),f_1(f^{-n_3}(x)),\cdots f^{n_1}(w_1)\}$, then the point $f^{n_1}(w_1)$ is on the positive orbit of $p$ and $f^{-n_3}(x)$ is on the negative of $p$ under $f_1$.

\paragraph{\textmd{\emph{The perturbation at $y$.}}} By the above construction, $f_1$ coincides with $f$ in $\bigcup_{i=0}^N f^i(V_y)$. Hence the piece of orbit $\{w_2,f(w_2),\cdots,f^{n_2}(w_2)\}$ is not modified. Under the iterate of $f_1$, the point $p$ has a positive iterate $f^{n_1}(w_1)\in V'_y$ and the point $f^{n_2}(w_2)$ has a negative iterate $w_2\in V'_y$. By Theorem~\ref{connecting lemma}, there is $f_2\in\mathcal{U}$, such that:
\begin{itemize}
\item $f_2$ coincides with $f_1$ outside $\bigcup_{i=0}^N f^i(V_y)$, hence $f_2$ coincides with $f$ outside $(\bigcup_{i=0}^N f^i(V_x))\cup(\bigcup_{i=0}^N f^i(V_y))$,
\item $f^{n_2}(w_2)$ is on the positive iterate of $p$ under $f_2$, and $f^{-n_3}(x)$ is on the negative of $p$ under $f_2$,
\item the piece of orbit segment $\{f^{-n_3}(x),f_2(f^{-n_3}(x)),\cdots,p,f_2(p),\cdots,f^{n_2}(w_2)\}$ under $f_2$ has at most $L=n_0+n_1+N$ points outside $V$,
\item the piece of orbit segment $\{f^{-n_0}(x),f^{-n_0-1}(x),\cdots,f^{-n_0-m}(x)\}$ under $f$ is contained in the piece of orbit segment $\{f^{-n_3}(x),f_2(f^{-n_3}(x)),\cdots,p,f_2(p),\cdots,f^{n_2}(w_2)\}$ under $f_2$.
\end{itemize}

\paragraph{\textmd{\emph{The perturbation at $z$.}}} By the above constructions, $f_2$ coincides with $f$ in $\bigcup_{i=0}^N f^i(V_z)$, and, under the iterate of $f_2$, the point $p$ has a positive iterate $f^{n_2}(w_2)\in V'_z$ and a negative iterate $f^{-n_3}(x)\in V'_z$. By Theorem~\ref{connecting lemma}, there is $g\in\mathcal{U}$, such that:
\begin{itemize}
\item $g$ coincides with $f_2$ outside $\bigcup_{i=0}^N f^i(V_z)$, hence $g$ coincides with $f$ outside $(\bigcup_{i=0}^N f^i(V_x))\cup(\bigcup_{i=0}^N f^i(V_y))\cup(\bigcup_{i=0}^N f^i(V_z))$,
\item the point $p\in U_x$ is a periodic point of $g$,
\item the piece of orbit segment $\{f^{-n_0}(x),f^{-n_0-1}(x),\cdots,f^{-n_0-m}(x)\}$ under $f$ is contained in $orb(p,g)$, hence $orb(p,g)$ has at least $m$ points contained in $V\subset U$,
\item $orb(p,g)$ has at most $L=n_0+n_1+N$ points outside $V\subset U$.
\end{itemize}
\medskip

This finishes the proof of Proposition~\ref{asymptotic connecting} in the non-periodic case.

\subsection{The periodic case.}
In this case, any point contained in $\alpha(x)$ is a periodic point with period less than or equal to $N$. By the assumption that all periodic point of $f$ is hyperbolic, we have that $\alpha(x)$ is a finite set. Since $\alpha(x)$ is chain transitive, this gives the following claim.

\begin{claim}\label{alpha x}
In this case, $\alpha(x)$ is a hyperbolic periodic orbit $orb(q)$, and $x\in W^u(orb(q))$.
\end{claim}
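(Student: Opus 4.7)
The plan is first to show that the periodic-case hypothesis forces $\alpha(x)$ to be a single hyperbolic periodic orbit $orb(q)$, and then to derive $x\in W^u(orb(q))$ from the hyperbolicity of that orbit.

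Since $f\in\mathcal{R}_0$ is Kupka--Smale (Theorem~\ref{generic properties}), every periodic point of $f$ is hyperbolic and hence isolated among periodic points; by compactness of $M$, there are only finitely many periodic points of period at most $N$. By hypothesis $\alpha(x)$ is contained in this finite set, so $\alpha(x)$ is a finite $f$-invariant set, i.e., a disjoint union of periodic orbits $O_1,\ldots,O_k$. To rule out $k\geq 2$, I would choose small pairwise disjoint open neighborhoods $U_i$ of $O_i$ with the property $f(U_j)\cap U_i=\emptyset$ for all $i\neq j$; this is possible because $f(O_j)=O_j$ is disjoint from $O_i$, so by continuity and compactness one may shrink the $U_\ell$ until the condition holds. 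On the one hand, $\alpha(x)=\bigcup_i O_i$ forces $f^{-n}(x)\in\bigcup_i U_i$ for all sufficiently large $n$; on the other, each $O_i\subset\alpha(x)$ means the backward orbit of $x$ visits every $U_i$ infinitely often. Combining these facts, one finds arbitrarily large $n$ with $f^{-n}(x)\in U_i$ and $f^{-n-1}(x)\in U_j$ for some $j\neq i$, so $f(f^{-n-1}(x))=f^{-n}(x)\in f(U_j)\cap U_i$, contradicting the choice of the $U_\ell$. Hence $k=1$ and $\alpha(x)=orb(q)$, with $q$ hyperbolic by Kupka--Smale.

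For the conclusion $x\in W^u(orb(q))$, I would first observe that $d(f^{-n}(x),orb(q))\to 0$ as $n\to+\infty$: any accumulation point of $(f^{-n}(x))_{n\geq 0}$ lies in $\alpha(x)=orb(q)$, so by compactness of $M$ and the finiteness of $orb(q)$ the distance to this compact set must tend to zero. Then, viewing $orb(q)$ as a set of hyperbolic fixed points of $f^{\tau(q)}$ and applying the local unstable manifold theorem at each of them, any point whose backward $f$-orbit converges to $orb(q)$ must lie on $W^u(orb(q))$; this gives $x\in W^u(orb(q))$. The argument is elementary throughout; the only step that warrants any care is the combinatorial observation that the backward orbit cannot shuttle between distinct periodic orbits without violating the pre-arranged condition $f(U_j)\cap U_i=\emptyset$, and this is essentially a one-line check once the neighborhoods are chosen.
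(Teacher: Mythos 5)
Your argument is correct, and it reaches the same conclusion as the paper, but it takes a somewhat different route for the key reduction. The paper disposes of the claim in one line by invoking the standard fact that $\alpha$-limit sets are chain transitive: since $\alpha(x)$ is a finite set of hyperbolic periodic points (Kupka--Smale plus the period bound $N$ give finiteness), a chain transitive finite invariant set must be a single periodic orbit. You instead re-derive this special case directly, by building pairwise disjoint neighborhoods $U_i$ of the constituent orbits with $f(U_j)\cap U_i=\emptyset$ for $i\neq j$ and observing that the backward orbit of $x$, which must eventually live in $\bigcup U_i$ and visit each $U_i$ infinitely often, would then be forced to violate this separation. That shuttling argument is sound and fully self-contained; it buys you independence from the chain transitivity lemma, at the cost of a few extra lines. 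Your derivation of $x\in W^u(orb(q))$ from $d(f^{-n}(x),orb(q))\to 0$ is also fine: one should just note, as you tacitly do, that for $n$ large the backward orbit must track the cyclic ordering of the points $f^j(q)$, so that $f^{-n\tau}(x)$ converges to a single one of them, from which the unstable manifold theorem for the hyperbolic fixed point $f^j(q)$ of $f^{\tau}$ gives the conclusion.
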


\subsubsection{Choice of points and connecting orbits}
By Claim~\ref{alpha x}, we have that $x\in W^u(orb(q))\setminus\{q\}$ for a hyperbolic periodic point $q$. To simplify the notations, we just assume that $q$ is a hyperbolic fixed point of $f$, but the general case is identical. Now $U$ is a neighborhood of $\{q\}=\alpha(x)$.

\paragraph{\textmd{\emph{The perturbation neighborhoods at $x$.}}} We take two small neighborhoods $V'_x\subset V_x\subset U_x$ of $x$, such that the following properties are satisfied:
\begin{itemize}
\item $q\notin\bigcup_{i=0}^N f^i(\overline{V_x})$,
\item $\bigcup_{i=0}^N f^i(V_x)\subset U_K$,
\item $V'_x\subset V_x$ satisfy Theorem~\ref{connecting lemma} for the triple $(f,\mathcal{U},N)$.
\end{itemize}

\paragraph{\textmd{\emph{The neighborhood $V$ and the point $y$.}}} By the hyperbolicity of the fixed point $q$, there is a neighborhood $V$ of $q$, such that, if the positive orbit of a point is contained in $\overline{V}$, then this point is in $W^s(q)$. Moreover, we can assume $V$ is small such that $\overline{V}\cap(\bigcup_{i=0}^N f^i(\overline{V_x}))=\emptyset$ and $\overline{V}\subset U$. By the assumption, we have that $x\dashv_K q$, hence by items $3$ and $4$ of Theorem~\ref{generic properties}, we have that $x\prec_{U_K} q$. By Lemma~\ref{prec}, there is a point $y\in V\setminus q$, such that $x\prec_{U_K} y\prec_V q$, and $orb^+(y)\subset V$. Then we can see that $y\in W^s(q)$. Moreover, since $x\notin V$, we have that $y\notin \overline{orb^-(x)}$.

\paragraph{\textmd{\emph{The perturbation neighborhoods at $y$, the connecting orbit from $x$ to $y$ and the number $L$.}}}
We take two neighborhoods $V'_y\subset V_y$ of $y$, such that the followings are satisfied:
\begin{itemize}
\item $\bigcup_{i=0}^N f^i(\overline{V_y})\subset V\setminus \overline{orb^-(x)}$, which implies that $(\bigcup_{i=0}^N f^i(\overline{V_y}))\cap (\bigcup_{i=0}^N f^i(\overline{V_x}))=\emptyset$,
\item $f^{j}(y)\notin \bigcup_{i=0}^N f^i(\overline{V_y})$, for any $j\geq N+1$,
\item $V'_y\subset V_y$ satisfy Theorem~\ref{connecting lemma} for the triple $(f,\mathcal{U},N)$.
\end{itemize}

Since $x\prec_{U_K} y$, there is a piece of orbit segment $\{w_1,f(w_1),\cdots,f^{n_1}(w_1)\}\subset U_K$, such that $w_1\in V'_x$ and $f^{n_1}(w_1)\in V'_y$.
Since $x\in W^u(p)$, there is an integer $n_0$, such that $f^{-i}(x)\in V$ for any $i\geq n_0$. By the fact that $orb^+(f^{N+1}(y))\cup orb^-(f^{-n_0}(x))\subset V$, and $\overline{V}\cap(\bigcup_{i=0}^N f^i(\overline{V_x}))=\emptyset$, we can see that the piece of orbit segment $\{w_1,f(w_1),\cdots,f^{n_1}(w_1)\}$ is disjoint with $orb^+(f^{N+1}(y))\cup orb^-(f^{-n_0}(x))$. Take $L=n_0+n_1+N$.

\subsubsection{The connecting process: to get a transverse homoclinic point}
Now we do the perturbations by Theorem~\ref{connecting lemma} and get a homoclinic point of $q$ contained in $U_x$.

\paragraph{\textmd{\emph{The perturbation at $x$.}}} The point $f^{-n_0}(x)$ has a positive iterate $x\in V'_x$ and the point $f^{n_1}(w_1)$ has a negative iterate $w_1\in V'_x$. By Theorem~\ref{connecting lemma}, there is a diffeomorphism $f_1\in\mathcal{U}$, such that:
\begin{itemize}
\item $f_1$ coincides with $f$ outside $\bigcup_{i=0}^N f^i(V_x)$,
\item $f^{n_1}(w_1)$ is on the positive orbit of $f^{-n_0}(x)$ under $f_1$,
\end{itemize}

Then there is a point $z\in V_x$, such that $f^{n_1}(w_1)$ is on the positive orbit of $z$ under $f_1$ and $f^{-n_0}(x)$ is on the negative orbit of $z$ under $f_1$. Assume that $f^{n_1}(w_1)=f_1^{m_1}(z)$ and $f^{-n_0}(x)=f_1^{-m_0}(z)$, we have that $m_1+m_0\leq n_1+n_0+N=L$. Moreover, the negative orbit of $f^{-n_0}(x)$ under $f$ is still the negative orbit of $f^{-n_0}(x)$ under $f_1$, hence $z\in W^u(q,f_1)$. Also, by the fact that $\bigcup_{i=0}^N f^i(\overline{V_y})\subset V\setminus \overline{orb^-(x)}$, one can see $(\bigcup_{i=0}^N f^i(\overline{V_y}))\cap \overline{orb^-(z,f_1)}=\emptyset$.

\paragraph{\textmd{\emph{The perturbation at $y$.}}} By the above construction, $f_1$ coincides with $f$ in $\bigcup_{i=0}^N f^i(V_y)$, and, under the iterate of $f_1$, the point $z$ has a positive iterate $f_1^{m_1}(p)=f^{n_1}(w_1)\in V'_y$ and the point $f^{N+1}(y)$ has a negative iterate $y\in V'_y$. By Theorem~\ref{connecting lemma}, there is $f_2\in\mathcal{U}$, such that:
\begin{itemize}
\item $f_2$ coincides with $f_1$ outside $\bigcup_{i=0}^N f^i(V_y)$, hence $f_2$ coincides with $f$ outside $(\bigcup_{i=0}^N f^i(V_x))\cup(\bigcup_{i=0}^N f^i(V_y))$,
\item the point $f^{N+1}(y)$ is on the positive iterate of $z$ under $f_2$.
\end{itemize}
\medskip

By the fact that $(\bigcup_{i=0}^N f^i(\overline{V_y}))\cap \overline{orb^+(f^{N+1}(y),f)}=\emptyset$, we can see that the positive orbit of $f^{N+1}(y)$ under $f$ is still  the positive orbit of $f^{N+1}(y)$ under $f_2$. Hence $z\in W^s(q,f_2)$. Since $(\bigcup_{i=0}^N f^i(\overline{V_y}))\cap \overline{orb^-(z,f_1)}=\emptyset$, we have that $orb^-(z,f_1)=orb^-(z,f_2)$, hence $z\in W^u(q,f_2)$. Then the point $z\in W^s(q,f_2)\cap W^u(q,f_2)$ is a homoclinic point of the hyperbolic fixed point $q$ of $f_2$.

By perturbing $f_2$ to a diffeomorphism $g\in\mathcal{U}$ with an arbitrarily $C^1$ small perturbation, we can assume that $z$ is a transverse homoclinic point of the hyperbolic fixed point $q$ of $g$. Moreover, the orbit of $z$ under $g$ is the same as that of $f_2$, hence the number of points of $orb(z,g)\setminus V$ is no more than $m_1+m_0$, hence no more than $L$.

\subsubsection{Periodic orbits shadowing the orbit of a homoclinic point: end of the proof} Now we have obtained a diffeomorphism $g\in\mathcal{U}$, a hyperbolic fixed point $q$ and a transverse homoclinic point $z\in W^s(q,g)\pitchfork W^u(q,g)$ whose orbit under $g$ has at most $L$ points outside $V$. Then the set $orb(z,g)\cup \{q\}$ is a hyperbolic set. By a standard argument with the $\lambda$-lemma and the Smale's homoclinic theorem, for any integer $m\in\mathbb{N}$, there is a periodic point $p\in V_x$ of $g$, such that $orb(p,g)$ has at most $L$ points outside $V$ and has at least $m$ points inside $V$. The proof of Proposition~\ref{asymptotic connecting} in the periodic case is finished by the fact that $V\subset U$.

\medskip

The proof of Proposition~\ref{asymptotic connecting} is completed.

\flushleft{\bf Xiaodong Wang} \\
School of Mathematical Sciences, Peking University, Beijing, 100871, P.R. China\\
Laboratoire de Math\'ematiques d'Orsay, Universit\'e Paris-Sud 11, Orsay 91405, France\\
\textit{E-mail:} \texttt{xdwang1987@gmail.com}\\

\end{document}